\documentclass[amssymb, 11pt]{amsart}
\usepackage{latexsym}
\usepackage{hyperref}

\newlength{\standardunitlength}
\setlength{\standardunitlength}{0.0125in}

\newcommand{\es}[1]{\begin{equation}\begin{split}#1\end{split}\end{equation}}

\newtheorem{prop}{Proposition}[section]

\newtheorem{lemma}[prop]{Lemma}
\newtheorem{cor}[prop]{Corollary}
\newtheorem{theorem}[prop]{Theorem}

\newcommand{\qbinom}[2]{{#1 \choose #2}_q}
\newcommand{\pbinom}[2]{{#1 \choose #2}_p}

\newcommand{\qfac}[1]{\left[#1\right]_q!}

\newcommand{\Z}{\mathbb{Z}}
\newcommand{\Q}{\mathbb{Q}}

\newcommand{\LL}{\mathcal{L}}

\newcommand{\GL}{{\operatorname{GL}}}

\newcommand{\Sp}{{\operatorname{Sp}}}
\newcommand{\Aut}{{\operatorname{Aut}}}
\newcommand{\Hom}{{\operatorname{Hom}}}

\newcommand{\Sym}{{\operatorname{Sym}}}
\newcommand{\Alt}{{\operatorname{Alt}}}
\newcommand{\M}{{\operatorname{M}}}

\newcommand{\Surj}{{\rm Sur}}

\title[Random Partitions and Cohen-Lenstra Heuristics]{Random Partitions and Cohen-Lenstra heuristics}
\author{Jason Fulman}
\address{Department of Mathematics, University of Southern California, Los Angeles, CA 90089}
\email{fulman@usc.edu}

\author{Nathan Kaplan}
\address{Department of Mathematics, University of California, Irvine, CA 92697-3875}
\email{nckaplan@math.uci.edu}

\thanks{{\it AMS classification numbers}: 15B52,05E05}

\thanks{Fulman is supported by Simons Foundation Grant 400528. Kaplan is supported by NSA Young
Investigator Grant H98230-16-10305, NSF Grant DMS 1802281 and by an AMS-Simons Travel Grant.  The authors thank the referees, Gilyoung Cheong, and Melanie Matchett Wood for helpful comments.}

\date{Version of January 24, 2019}

\keywords{Cohen-Lenstra heuristics, Hall-Littlewood polynomial, Probability Measure, Random Matrices, Random Partitions, Finite Abelian Group}

\begin{document}

\begin{abstract}
We investigate combinatorial properties of a family of probability distributions on finite abelian $p$-groups.  This family includes several well-known distributions as specializations.  These specializations have been studied in the context of Cohen-Lenstra heuristics and cokernels of families of random $p$-adic matrices.
\end{abstract}

\maketitle

\section{Introduction}

Friedman and Washington study a distribution on finite abelian p-groups $G$ of rank at
most $d$ in \cite{FW}.  In particular, a finite abelian $p$-group $G$ of rank $r \leq d$, is chosen with probability
\begin{equation} \label{FWform}
P_d(G) = \frac{1}{|\Aut(G)|} \left( \prod_{i=1}^d (1-1/p^i) \right) \left(\prod_{i=d-r+1}^d (1-1/p^i) \right).
\end{equation}
Let $\lambda = (\lambda_1,\ldots, \lambda_r)$ with $\lambda_1 \ge \lambda_2 \ge \cdots \ge \lambda_r \ge 1$ be a partition.  A finite abelian $p$-group $G$ has \emph{type} $\lambda$ if
\[
G \cong \Z/p^{\lambda_1}\Z \times \cdots \times \Z/p^{\lambda_r}\Z.
\]
Note that $r$ is equal to the rank of $G$.

There is a correspondence between measures on the set of integer partitions and on isomorphism classes of finite abelian $p$-groups.  Let $\LL$ denote the set of isomorphism classes of finite abelian $p$-groups.  Given a measure $\nu$ on partitions, we get a corresponding measure $\nu'$ on $\LL$ by setting $\nu'(G) = \nu(\lambda)$ where $G \in \LL$ is the isomorphism class of finite abelian $p$-groups of type $\lambda$.  We analogously define a measure on partitions given a measure on $\LL$.  When $G$ is a finite abelian group of type $\lambda$, we write $|\Aut(\lambda)|$ for $|\Aut(G)|$, and from page 181 of \cite{Mac},
\begin{equation}\label{AutDef}
|\Aut(\lambda)| = p^{\sum (\lambda_i')^2} \prod_i (1/p)_{m_i(\lambda)}.
\end{equation} The notation used in \eqref{AutDef} is standard, and we review it in Section \ref{secnot}.

We introduce and study a more general distribution on integer partitions and on finite abelian $p$-groups $G$ of rank at most $d$.  We choose a partition $\lambda$ with $r\le d$ parts with probability
\begin{equation} \label{gen}
P_{d,u}(\lambda) =
 \frac{u^{|\lambda|}}{p^{\sum (\lambda_i')^2} \prod_i (1/p)_{m_i(\lambda)}}
\prod_{i=1}^d (1-u/p^i) \prod_{i=d-r+1}^d (1-1/p^i).
\end{equation}
This gives a distribution on partitions for all real $p>1$ and $0<u<p$.  We can include $p$ as an additional parameter and write $P_{d,u}^p(\lambda)$.  For clarity, we will suppress this additional notation except in Section \ref{alter}.  When $p$ is prime, we can interpret \eqref{gen} as a distribution on $\LL$.  When $p$ is not prime it does not make sense to talk about automorphisms of a finite abelian $p$-group, but in this case we can take \eqref{AutDef} as the definition of $|\Aut(\lambda)|$.

The main goal of this paper is to investigate combinatorial properties of the family of distributions of \eqref{gen}.  We begin by noting six interesting specializations of this measure.
\begin{itemize}

\item Setting $u=1$ in $P_{d,u}$ recovers $P_d$.

\item We define a distribution $P_{\infty,u}$ by
\[
\lim_{d\to \infty} P_{d,u}(\lambda) = P_{\infty,u}(\lambda) = \frac{u^{|\lambda|}}{|\Aut(\lambda)|} \prod_{i \geq 1} (1-u/p^i) .
\]
It is not immediately clear that this limit defines a distribution on partitions, but this follows from the sentence after Proposition \ref{HLform}, from Theorem \ref{markov}, or from Theorem \ref{momgen}, taking $\mu$ to be the trivial partition.

For $0<u<1$, this probability measure arises by choosing a random non-negative integer $N$ with probability $P(N=n) =
(1-u) u^n$, and then looking at the $z-1$ piece of a random element of the finite group $\GL(N,p)$.
See \cite{Fu} for details.

\item Note that
\[
P_{\infty,1}(\lambda) =  \frac{1}{|\Aut(\lambda)|} \prod_{i \geq 1} (1-1/p^i).
\]
This is the measure on partitions corresponding to the usual Cohen-Lenstra measure on finite abelian $p$-groups \cite{CL}.  It also arises from studying the
$z-1$ piece of a random element of the finite group $\GL(d,p)$ in the $d \rightarrow \infty$ limit \cite{Fu},
or from studying the cokernel of a random $d \times d\ p$-adic matrix in the $d \rightarrow \infty$ limit
\cite{FW}.

\item Let $w$ be a positive integer and $\lambda$ a partition. The $w$-probability
of $\lambda$, denoted by $P_w(\lambda)$, is the probability that a finite abelian $p$-group of type $\lambda$ is obtained by the following three step random process:

\begin{itemize}
\item Choose randomly a $p$-group $H$ of type $\mu$ with respect to the measure $P_{\infty,1}(\mu)$.
\item Then choose $w$ elements $g_1,\cdots,g_w$ of $H$ uniformly at random.
\item Finally, output $H/ \langle g_1,\cdots,g_w \rangle$, where $\langle g_1,\cdots,g_w \rangle$
denotes the group generated by $g_1,\cdots,g_w$.
\end{itemize}

From Example 5.9 of Cohen and Lenstra \cite{CL}, it follows that $P_w(\lambda)$ is a special case of (\ref{gen}):
\begin{equation}\label{Pwdist}
P_w(\lambda) = P_{\infty,1/p^w}(\lambda).
\end{equation}

\item We now mention two analogues of Proposition 1 of \cite{FW} for rectangular matrices. Let $w$ be a non-negative integer.  Friedman and Washington do not discuss this explicitly, but using the same methods as in \cite{FW} one can show that taking the limit as $d \rightarrow \infty$ of the probability that a randomly chosen $d \times (d+w)$ matrix over $\Z_p$ has cokernel isomorphic to a finite abelian $p$-group of type $\lambda$ is given by $P_{\infty,1/p^w}(\lambda)$.  See the discussion above Proposition 2.3 of \cite{W3}.

Similarly, Tse considers rectangular matrices with more rows than columns and shows that $P_{\infty,1/p^w}(\lambda)$ is equal to the $d \rightarrow \infty$ probability that a randomly chosen $(d+w) \times d$ matrix over $\Z_p$ has cokernel isomorphic to $\Z_p^w \oplus G$, where $G$ is a finite
abelian $p$-group of type $\lambda$ \cite{Tse}.

\item In Section \ref{alter} we see that the measure on partitions studied by Bhargava, Kane, Lenstra, Poonen and Rains \cite{BKLPR}, arising from taking the cokernel of a random alternating  $p$-adic matrix is also a special case of $P_{d,u}$.  Taking a limit as the size of the matrix goes to infinity gives a  distribution consistent with heuristics of Delaunay for Tate-Shafarevich groups of elliptic curves defined over $\Q$ \cite{Del2}.

\end{itemize}

A few of these specializations have received extensive attention in prior work:
\begin{itemize}

\item When $p$ is an odd prime, Cohen and Lenstra conjecture that $P_{\infty,1}$ models the distribution of $p$-parts of class groups of imaginary quadratic fields and $P_{\infty,1/p}$ models the distribution of $p$-parts of class groups of real quadratic fields \cite{CL}.  Theorem 6.3 in \cite{CL} gives the probability that a group chosen from $P_{\infty,1/p^w}$ has given $p$-rank.  For any $n$ odd, they show that the average number of elements of order exactly $n$ of a group drawn from $P_{\infty,1}$ is $1$, and that this average for a group drawn from $P_{\infty,1/p}$ is $1/n$ \cite[Section 9]{CL}.  Delaunay generalizes these results in Corollary 11 of \cite{Del}, where he computes the probability that a group drawn from $P_{\infty,u}$ simultaneously has specified $p^j$-rank for several values of $j$. Delaunay and Jouhet compute averages of even more complicated functions involving moments of the number of $p^j$-torsion points for varying $j$ in \cite{DelJou}.

The distribution of $2$-parts of class groups of quadratic fields is not modeled by $P_{\infty,u}$ and several authors have worked to understand these issues.  Motivated by work of Gerth \cite{Ger,Ger2}, Fouvry and Kl\"uners study the conjectural distribution of $p^j$-ranks and moments for the number of torsion points of $C_D^2$, the square of the ideal class group of a quadratic field \cite{FoKlu}.

\item Delaunay \cite{Del}, and Delaunay and Jouhet \cite{DelJou}, prove analogues of the results described in the previous paragraphs for groups drawn from the $n \rightarrow \infty$ specialization of the distribution we study in Section \ref{alter}.  In \cite{DelJou2}, they prove analogues of the results of Fouvry and Kl\"uners \cite{FoKlu} for this distribution.

\end{itemize}

\subsection{Outline of the Paper}\label{outline}
In Section \ref{cok_padic} we interpret $P_{d,u}$ in terms of Hall-Littlewood polynomials and use this interpretation to compute the probability that a partition chosen from $P_{d,u}$ has given size, given number of parts, or given size and number of parts.  In Theorem \ref{markov} we give an algorithm for producing a partition according to the distribution $P_{d,u}$.

In Section \ref{alter} we show how a measure studied in \cite{BKLPR} that arises from distributions of cokernels of random alternating $p$-adic matrices is given by a specialization of $P_{d,u}$.  In Section \ref{sym} we briefly study a measure on partitions that arises from distributions of cokernels of random symmetric $p$-adic matrices that is studied in \cite{CKLPW, W}.  We give an algorithm for producing a partition according to this distribution.

In Section \ref{secmom} we combinatorially compute the moments of the distribution $P_{d,u}$ for all $d$ and $u$. These moments were already known for the case $d=\infty,u=1$, and our method is new even in that special case. We also show that in many cases these moments determine a unique distribution.  This is a generalization of a result of Ellenberg, Venkatesh, and Westerland \cite{EVW}, that the moments of the Cohen-Lenstra distribution determine the distribution, and of Wood \cite{W3}, that the moments of the distribution $P_w$ determine the distribution.

\subsection{Notation}\label{secnot}

Throughout this paper, when $p$ is a prime number we write $\Z_p$ for the ring of $p$-adic integers.

For a ring $R$, let $\M_d(R)$ denote the set of all $d \times d$ matrices with entries in $R$ and let $\Sym_d(R)$ denote the set of all $d \times d$ symmetric matrices with entries in $R$. For an even integer $d$, let $\Alt_d(R)$ denote the set of all $d \times d$ alternating matrices with entries in $R$ (that is, matrices $A$ with zeros on the diagonal satisfying that the transpose of $A$ is equal to $-A$).

For groups $G$ and $H$ we write $\Hom(G,H)$ for the set of homomorphisms from $G$ to $H,\ \Surj(G,H)$ for the set of surjective homomorphisms from $G$ to $H$, and $\Aut(G)$ for the set of automorphisms of $G$.  If $G$ is a finite abelian $p$-group of type $\lambda$ and $H$ is a finite abelian $p$-group of type $\mu$, we sometimes write $|\Surj(\lambda,\mu)|$ for $|\Surj(G,H)|$.

For a partition $\lambda$, we let $\lambda_i$ denote the size of the $i$\textsuperscript{th} part of $\lambda$ and $m_i(\lambda)$ denote the number of parts of $\lambda$ of size $i$.  We let $\lambda_i'$ denote the size of the $i$\textsuperscript{th} column in the diagram of $\lambda$ (so
$\lambda_i' = m_i(\lambda) + m_{i+1}(\lambda) + \cdots$). We also let
$n(\lambda)= \sum_i {\lambda_i' \choose 2}$. We generally use $r$ or $r(\lambda)$ to denote the number of parts of $\lambda$.  We use $|\lambda| = n$ to say that $\lambda$ is a partition of $n$, or equivalently $\sum \lambda_i = n$.

We let $n_{\lambda}(\mu)$ denote the number of subgroups of type $\mu$ of a finite abelian $p$-group of type $\lambda$.  For a finite abelian group $G$, the number of subgroups $H \subseteq G$ of type $\mu$ equals the number of subgroups for which $G/H$ has type $\mu$ \cite[Equation (1.5), page 181]{Mac}.

We also let $(x)_i = (1-x) (1-x/p) \cdots (1-x/p^{i-1})$.  So $(1/p)_i = (1-1/p) \cdots (1-1/p^i)$.  With this notation, \eqref{gen} is equivalent to
\[
P_{d,u}(\lambda) =
 \frac{u^{|\lambda|} (u/p)_d }{p^{\sum (\lambda_i')^2} \prod_i (1/p)_{m_i(\lambda)}}
 \frac{(1/p)_d}{(1/p)_{d-r(\lambda)}}.
\]

We use some notation related to $q$-binomial coefficients, namely:
\begin{eqnarray*}
[n]_q & = & \frac{q^n-1}{q-1} = 1 + q + \cdots + q^{n-1}; \\
\qfac{n} & = & [n]_q [n-1]_q \cdots [2]_q; \\
\qbinom{n}{j} & = &  \frac{\qfac{n}}{\qfac{j} \qfac{n-j}}.
\end{eqnarray*}

Finally if $f(u)$ is a power series in $u$, we let Coef. $u^n$ in $f(u)$ denote the coefficient of $u^n$
in $f(u)$.

\section{Properties of the measure $P_{d,u}$}\label{cok_padic}

To begin we give a formula for $P_{d,u}(\lambda)$ in terms of Hall-Littlewood polynomials. We let $P_{\lambda}$
denote a Hall-Littlewood polynomial, defined for a partition $\lambda=(\lambda_1,\cdots,\lambda_n)$ of length
at most $n$ by
\[
P_{\lambda}(x_1,\cdots,x_n;t) = \frac{1}{v_{\lambda}(t)} \sum_{w \in S_n}
w \left(x_1^{\lambda_1} \cdots x_n^{\lambda_n} \prod_{i<j} \frac{x_i-tx_j}{x_i-x_j} \right),
\]
where
\[
v_{\lambda}(t) = \prod_{i \geq 0} \prod_{j=1}^{m_i(\lambda)} \frac{1-t^j}{1-t},
\]
the permutation $w \in S_n$ permutes the $x$ variables, and we note that some parts of $\lambda$ may have size $0$.  For background on Hall-Littlewood polynomials, see Chapter 3 of \cite{Mac}.

\begin{prop} \label{HLform} For a partition $\lambda$ with $r \leq d$ parts,
\[
P_{d,u}(\lambda) = \prod_{i=1}^d (1-u/p^i) \cdot \frac{P_{\lambda}(\frac{u}{p},\frac{u}{p^2},\cdots,\frac{u}{p^d},0,\cdots ; \frac{1}{p})}{p^{n(\lambda)}} .
\]
\end{prop}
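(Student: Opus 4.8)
The plan is to evaluate the specialization $P_\lambda\!\left(\tfrac{u}{p},\dots,\tfrac{u}{p^d},0,\dots;\tfrac1p\right)$ using the principal-specialization formula for Hall-Littlewood polynomials from Chapter 3 of \cite{Mac}. Recall that for a partition $\lambda$ with at most $d$ parts, the Hall-Littlewood polynomial satisfies
\[
P_\lambda\!\left(1,t,t^2,\dots,t^{d-1};t\right)
 = t^{n(\lambda)} \prod_{i\ge 1} \frac{(t;t)_d}{(t;t)_{m_i(\lambda)}\,(t;t)_{d-r(\lambda)}}
\]
up to the standard bookkeeping with $m_0(\lambda)=d-r(\lambda)$; here $(t;t)_k=(1-t)(1-t^2)\cdots(1-t^k)$. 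Using homogeneity of $P_\lambda$ (it is homogeneous of degree $|\lambda|$), we have
\[
P_\lambda\!\left(\tfrac{u}{p},\tfrac{u}{p^2},\dots,\tfrac{u}{p^d},0,\dots;\tfrac1p\right)
 = \left(\tfrac{u}{p}\right)^{|\lambda|} P_\lambda\!\left(1,\tfrac1p,\dots,\tfrac1{p^{d-1}},0,\dots;\tfrac1p\right),
\]
so it remains to substitute $t=1/p$ into the principal specialization and simplify.

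First I would translate the $(t;t)_k$ products at $t=1/p$ into the paper's notation. One checks directly that $(1/p;1/p)_k = (1/p)_k \cdot p^{-\binom{k}{2}}\cdot(\text{a power of }p)$; more precisely $(1-1/p^j) = \tfrac{1}{p^j}(p^j-1)$, so $(1/p;1/p)_k$ differs from the paper's $(1/p)_k = (1-1/p)(1-1/p^2)\cdots(1-1/p^k)$ only by which normalization one uses — in fact these are literally equal, since $(1/p)_k$ as defined on page of the excerpt is $(1-1/p)(1-1/p^2)\cdots(1-1/p^k)$. So the product over $i$ in the specialization becomes
\[
\frac{(1/p)_d}{(1/p)_{d-r(\lambda)}\,\prod_{i\ge 1}(1/p)_{m_i(\lambda)}},
\]
which is exactly the combinatorial factor appearing in the boxed reformulation of $P_{d,u}(\lambda)$ given just before this section. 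Second, I would combine the prefactor $\left(\tfrac{u}{p}\right)^{|\lambda|}$ coming from homogeneity with the $t^{n(\lambda)} = p^{-n(\lambda)}$ from the principal specialization, and with the $p^{n(\lambda)}$ in the denominator of the claimed identity and the explicit product $\prod_{i=1}^d(1-u/p^i) = (u/p)_d$ out front, and check that everything collapses to
\[
P_{d,u}(\lambda) = \frac{u^{|\lambda|}(u/p)_d}{p^{\sum(\lambda_i')^2}\prod_i (1/p)_{m_i(\lambda)}}\cdot\frac{(1/p)_d}{(1/p)_{d-r(\lambda)}}.
\]
The only genuine arithmetic point is matching the power of $p$: one needs the identity $\sum_i (\lambda_i')^2 = |\lambda| + 2n(\lambda)$, which follows from $\binom{\lambda_i'}{2} = \tfrac12\big((\lambda_i')^2 - \lambda_i'\big)$ together with $\sum_i \lambda_i' = |\lambda|$. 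Tracking this converts the $p^{-|\lambda|}$ (from $(u/p)^{|\lambda|}$), the $p^{-n(\lambda)}$, and the $p^{+n(\lambda)}$ cleanly into the single factor $p^{-\sum(\lambda_i')^2}$ sitting in $P_{d,u}$, since $-|\lambda| - n(\lambda) - n(\lambda) = -(|\lambda| + 2n(\lambda)) = -\sum(\lambda_i')^2$.

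The main obstacle is bookkeeping rather than conceptual: one must be careful about the role of zero parts (the $m_0(\lambda) = d - r(\lambda)$ "parts" of size $0$ that are implicit when we pad $\lambda$ to length $d$), since the factor $(1/p)_{d-r(\lambda)}$ in $|\Aut(\lambda)|$-type formulas comes precisely from this $i=0$ term in $\prod_i(1/p)_{m_i(\lambda)}$, and the principal specialization formula in \cite{Mac} must be quoted in the form that accounts for it. Once the correct version of the specialization is pinned down and the degree/normalization factors are tracked, the identity is a direct comparison of the two expressions. As a sanity check, taking $\lambda$ empty gives $P_\lambda = 1$, $n(\lambda)=0$, $r(\lambda)=0$, and both sides reduce to $\prod_{i=1}^d(1-u/p^i)$, confirming the normalization and incidentally showing, as remarked in the excerpt, that $P_{d,u}$ is a genuine probability distribution.
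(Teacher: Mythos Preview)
Your approach is essentially the same as the paper's: both invoke the principal specialization of Hall--Littlewood polynomials from page 213 of \cite{Mac} and reduce the matching of exponents to the identity $|\lambda|+2n(\lambda)=\sum_i(\lambda_i')^2$. You simply spell out the intermediate steps (homogeneity, then specialization at $t=1/p$) that the paper compresses into a single citation.

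Two minor corrections. First, your displayed principal-specialization formula is mis-parenthesized: the product $\prod_{i\ge 1}$ should not contain the factors $(t;t)_d$ and $(t;t)_{d-r(\lambda)}$, which appear once, not once per $i$. The correct form is
\[
P_\lambda(1,t,\dots,t^{d-1};t)=t^{n(\lambda)}\,\frac{(t;t)_d}{(t;t)_{d-r(\lambda)}\prod_{i\ge 1}(t;t)_{m_i(\lambda)}},
\]
which is exactly what you use two lines later, so the argument is unaffected. Second, your closing parenthetical is not right: plugging in $\lambda=\emptyset$ only checks that both sides equal $(u/p)_d$ at the empty partition; it does \emph{not} show $\sum_\lambda P_{d,u}(\lambda)=1$. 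The paper obtains that normalization separately, from a Cauchy-type identity (Example~1, p.~225 of \cite{Mac}) or from Theorem~\ref{markov}.
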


\begin{proof}
From page 213 of \cite{Mac},
\[
\prod_{i=1}^d (1-u/p^i) \cdot \frac{P_{\lambda}(\frac{u}{p},\frac{u}{p^2},\cdots,\frac{u}{p^d},0,\cdots ; \frac{1}{p})}{p^{n(\lambda)}}
\]
is equal to
\[
\frac{u^{|\lambda|} \prod_{i=1}^d (1-u/p^i)}{\prod_i (1/p)_{m_i(\lambda)}}
\frac{(1/p)_d}{p^{|\lambda|+2n(\lambda)} (1/p)_{d-r}}.
\]
Since $|\lambda|+2n(\lambda) = \sum (\lambda_i')^2$,
this is equal to \eqref{gen}, and the proposition follows.
\end{proof}

The fact that $\sum_{\lambda} P_{d,u}(\lambda) = 1$ follows from Proposition \ref{HLform}
and the identity of Example 1 on page 225 of \cite{Mac}. It is also immediate from Theorem
\ref{markov}.

There are two ways to generate random partitions $\lambda$ according to the distribution $P_{d,u}$.
The first is to run the ``Young tableau algorithm'' of \cite{Fu}, stopped when coin $d$ comes up
tails. The second method is given by the following theorem. 

\begin{theorem} \label{markov} Starting with $\lambda_0'=d$, define in succession $d \geq \lambda_1' \geq \lambda_2'
\geq \cdots$ according to the rule that if $\lambda_i'=a$, then $\lambda_{i+1}'=b$ with probability
\[
K(a,b) = \frac{u^b (1/p)_a (u/p)_a}{p^{b^2} (1/p)_{a-b} (1/p)_b (u/p)_b} .
\]
Then the resulting partition is
distributed according to $P_{d,u}$.
\end{theorem}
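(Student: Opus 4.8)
The plan is to verify directly that the Markov chain on column lengths $d = \lambda_0' \geq \lambda_1' \geq \lambda_2' \geq \cdots$ with transition kernel $K$ produces a partition whose probability of being $\lambda$ equals $P_{d,u}(\lambda)$. Since a partition $\lambda$ with $r$ parts is completely determined by its (weakly decreasing, eventually zero) sequence of column lengths $(\lambda_1', \lambda_2', \ldots, \lambda_{\lambda_1}')$ with $\lambda_i' = 0$ for $i > \lambda_1$, the probability that the chain outputs $\lambda$ is the telescoping product
\[
\prod_{i \geq 0} K(\lambda_i', \lambda_{i+1}').
\]
So the first step is to expand this product and show it equals the formula in \eqref{gen}, using the form
\[
P_{d,u}(\lambda) = \frac{u^{|\lambda|} (u/p)_d}{p^{\sum (\lambda_i')^2} \prod_i (1/p)_{m_i(\lambda)}} \frac{(1/p)_d}{(1/p)_{d - r(\lambda)}}
\]
recorded in Section \ref{secnot}.

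The key observation that makes the telescoping work is to sort the factors in $K(a,b)$ according to whether they depend on $a$, on $b$, or on $a - b$. Writing $K(a,b) = \frac{u^b}{p^{b^2}} \cdot \frac{(1/p)_a (u/p)_a}{(1/p)_b (u/p)_b} \cdot \frac{1}{(1/p)_{a-b}}$, the factor $\frac{(1/p)_a (u/p)_a}{(1/p)_b (u/p)_b}$ telescopes along the chain: in $\prod_{i\ge 0} K(\lambda_i',\lambda_{i+1}')$ the ``numerator at step $i$'' $(1/p)_{\lambda_i'}(u/p)_{\lambda_i'}$ cancels against the ``denominator at step $i{-}1$'', leaving only the very first numerator $(1/p)_{\lambda_0'}(u/p)_{\lambda_0'} = (1/p)_d (u/p)_d$ (the chain eventually reaches $0$, where $(1/p)_0 = (u/p)_0 = 1$, so there is no dangling tail denominator). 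Next, $\prod_i u^{\lambda_{i+1}'} = u^{\sum_{i\ge 1}\lambda_i'} = u^{|\lambda|}$ and $\prod_i p^{-(\lambda_{i+1}')^2} = p^{-\sum_{i \geq 1}(\lambda_i')^2}$; since $\lambda_0' = d$ contributes nothing to $|\lambda|$ but we want $p^{-\sum(\lambda_i')^2}$ over $i \geq 1$, this matches the exponent in \eqref{gen} exactly (note \eqref{gen}'s sum is also over the nonzero columns). Finally, one must check $\prod_{i \geq 0} \frac{1}{(1/p)_{\lambda_i' - \lambda_{i+1}'}} = \frac{1}{\prod_i (1/p)_{m_i(\lambda)}} \cdot \frac{(1/p)_d}{(1/p)_{d-r}}$: the multiset of differences $\{\lambda_i' - \lambda_{i+1}' : i \geq 0\}$ consists of the differences $\lambda_i' - \lambda_{i+1}'$ for $i \geq 1$, which are precisely the part multiplicities $m_i(\lambda)$ reindexed, together with the extra first difference $d - \lambda_1' = d - r$. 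Assembling these pieces gives \eqref{gen}.

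Before the telescoping identity is meaningful, however, one must confirm that $K$ is a genuine stochastic kernel: $K(a,b) \geq 0$ for $0 \leq b \leq a \leq d$, and $\sum_{b=0}^{a} K(a,b) = 1$. Nonnegativity is clear for $0 < u < p$ and $p > 1$. The summation identity $\sum_{b=0}^a K(a,b) = 1$ is, after clearing the $b$-independent factor $(1/p)_a(u/p)_a$, exactly a finite $q$-series identity; I expect it to follow from the $q$-binomial theorem (the finite Cauchy identity / Rothe's formula) applied with $q = 1/p$, writing $\frac{1}{(1/p)_{a-b}(1/p)_b} = \frac{1}{(1/p)_a}\binom{a}{b}_{1/p}$ up to normalization and recognizing $\sum_b \frac{u^b}{p^{b^2}(u/p)_b}\binom{a}{b}_{1/p}$ as a specialization of ${}_1\phi_1$ or a $q$-exponential-type sum that evaluates to $1/(u/p)_a$. \textbf{This $q$-series evaluation is the main obstacle} — everything else is bookkeeping. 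Alternatively, one can sidestep it: since $\sum_\lambda P_{d,u}(\lambda) = 1$ is already known (from Proposition \ref{HLform} and Macdonald's identity), and since the telescoping computation shows the chain assigns $\lambda$ probability \emph{proportional to} (in fact equal to) $P_{d,u}(\lambda)$ whenever the chain is well-defined, one gets that the ``defect'' $1 - \sum_b K(a,b)$ summed appropriately must vanish, forcing $K$ to be stochastic; but it is cleaner to prove the single-row identity $\sum_{b=0}^a K(a,b) = 1$ directly and then the rest is the pure telescoping argument above.
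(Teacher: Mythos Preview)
Your approach is essentially identical to the paper's: compute the product $K(d,\lambda_1')K(\lambda_1',\lambda_2')\cdots$, observe the telescoping in the $(1/p)_a(u/p)_a$ factors, collect $u^{|\lambda|}$ and $p^{-\sum(\lambda_i')^2}$, and identify the residual difference factors with $\prod_i (1/p)_{m_i(\lambda)}$ together with the extra $(1/p)_{d-r}$ from the first step. The paper's proof is exactly this, stated in two sentences (``there is a lot of cancellation, and what is left is \ldots'').

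The only divergence is that you flag the identity $\sum_{b=0}^a K(a,b)=1$ as ``the main obstacle,'' whereas the paper does not verify it separately at all. The paper's logic is the sidestep you mention: since $\sum_\lambda P_{d,u}(\lambda)=1$ is established independently (via Proposition~\ref{HLform} and a Hall--Littlewood identity from Macdonald), and since the telescoping computation shows the chain assigns $\lambda$ the exact value $P_{d,u}(\lambda)$, summing over all $\lambda$ with a fixed first column $\lambda_1'=r$ and then over $r$ forces each row sum $\sum_b K(a,b)$ to equal~$1$ by downward induction on~$a$ (note $K(0,0)=1$). So you have correctly identified both routes; the paper simply takes the shorter one and does not isolate the single-row $q$-identity.
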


\begin{proof} One must compute
\[
K(d,\lambda_1') K(\lambda_1',\lambda_2') K(\lambda_2',\lambda_3') \cdots .
\]
There is a lot of
cancellation, and (recalling that $\lambda_1'=r$), what is left is:
\[
\frac{(u/p)_d (1/p)_d u^{|\lambda|}} {(1/p)_{d-r} p^{\sum (\lambda_i')^2} \prod_i (1/p)_{m_i(\lambda)}}.
\]
This is equal to $P_{d,u}(\lambda)$, completing the proof.
\end{proof}

The following corollary is immediate from Theorem \ref{markov}.

\begin{cor} \label{numparts} Choose $\lambda$ from $P_{d,u}$. Then the chance that $\lambda$ has $r\le d$ parts
is equal to
\[
\frac{u^r (1/p)_d (u/p)_d}{p^{r^2} (1/p)_{d-r} (1/p)_r (u/p)_r} .
\]
\end{cor}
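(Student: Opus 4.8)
The plan is to read the answer straight off the Markov chain in Theorem \ref{markov}, with essentially no extra computation. The one thing to notice is that the number of parts of $\lambda$ is exactly $\lambda_1'$, the height of the first column, since by definition $\lambda_1' = m_1(\lambda) + m_2(\lambda) + \cdots$ counts all parts of $\lambda$. In the chain of Theorem \ref{markov} one starts from $\lambda_0' = d$ and the first transition produces $\lambda_1'$, distributed so that $\pp(\lambda_1' = b) = K(d,b)$. Therefore, when $\lambda$ is drawn from $P_{d,u}$, the probability that $\lambda$ has exactly $r$ parts equals $\pp(\lambda_1' = r) = K(d,r)$.

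It then remains only to substitute $a = d$ and $b = r$ into the formula for $K(a,b)$ given in Theorem \ref{markov}, which yields
\[
K(d,r) = \frac{u^r (1/p)_d (u/p)_d}{p^{r^2} (1/p)_{d-r} (1/p)_r (u/p)_r},
\]
precisely the claimed expression.

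I do not expect any real obstacle. The only subtlety worth stating is that the event ``$\lambda$ has $r$ parts'' is literally the event ``$\lambda_1' = r$'', so no summation over partitions is needed: once $\lambda_1'$ is fixed, the subsequent transitions determine only the shape of $\lambda$ below its first column and can never alter the number of parts, so the single transition probability $K(d,r)$ already gives the full answer. (One could instead bypass Theorem \ref{markov} entirely and sum $P_{d,u}(\lambda)$ over all $\lambda$ with $r$ parts using Proposition \ref{HLform} together with a Hall--Littlewood summation identity, but this is strictly more work and the Markov chain route makes the corollary immediate, as claimed in the text.)
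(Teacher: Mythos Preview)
Your proposal is correct and follows essentially the same approach as the paper: the paper's proof is the single line ``From Theorem \ref{markov}, the sought probability is $K(d,r)$.'' You have simply spelled out why (namely that the number of parts equals $\lambda_1'$, which is produced by the first transition of the chain), but the argument is identical.
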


\begin{proof} From Theorem \ref{markov}, the sought probability is $K(d,r)$. \end{proof}

The $u=1$ case of this result appears in another form in work of Stanley and Wang \cite{SW}.  In Theorem 4.14 of \cite{SW}, the authors compute the probability $Z_d(p,r)$ that the Smith normal form of a certain model of random integer matrix has at most $r$ diagonal entries divisible by $p$.  Setting $u=1$ in Corollary \ref{numparts} gives $Z_d(p,r) - Z_d(p,r-1)$.  This expression also appears in \cite{CKK} where the authors study finite abelian groups arising as $\Z^d/\Lambda$ for random sublattices $\Lambda \subset \Z^d$; isolating the prime $p$ and the $i = r$ term in Corollary 1.2 of \cite{CKK} gives the $u=1$ case of Corollary \ref{numparts}.

The next result computes the chance that $\lambda$ chosen from $P_{d,u}$ has size $n$.

\begin{theorem} \label{size}
The chance that $\lambda$ chosen from $P_{d,u}$ has size $n$ is equal to
\[ \frac{u^n}{p^n} \frac{(u/p)_d (1/p)_{d+n-1}}{(1/p)_{d-1} (1/p)_n}. \]
\end{theorem}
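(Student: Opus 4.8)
The plan is to compute the full size generating function $F(z):=\sum_{\lambda}P_{d,u}(\lambda)\,z^{|\lambda|}$ in closed form and then extract the coefficient of $z^{n}$. Since $P_{d,u}(\lambda)=0$ unless $r(\lambda)\le d$, Proposition \ref{HLform} together with the homogeneity of $P_{\lambda}$ (degree $|\lambda|$) gives
\[
F(z)=\prod_{i=1}^{d}\Bigl(1-\frac{u}{p^{i}}\Bigr)\sum_{r(\lambda)\le d}\frac{P_{\lambda}\bigl(\tfrac{uz}{p},\tfrac{uz}{p^{2}},\dots,\tfrac{uz}{p^{d}},0,\dots\,;\tfrac{1}{p}\bigr)}{p^{n(\lambda)}}.
\]
The one step that requires an idea is to recognize the factor $p^{-n(\lambda)}$ as the principal specialization $Q_{\lambda}(1,\tfrac{1}{p},\tfrac{1}{p^{2}},\dots\,;\tfrac{1}{p})$ of the Hall--Littlewood $Q$-function; this follows, e.g., from the $d\to\infty$ limit of the specialization formula on page 213 of \cite{Mac} already used to prove Proposition \ref{HLform}, combined with $Q_{\lambda}=b_{\lambda}(t)P_{\lambda}$. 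Granting this, the inner sum is an instance of the Hall--Littlewood Cauchy identity
\[
\sum_{\lambda}P_{\lambda}(x_{1},\dots,x_{d};t)\,Q_{\lambda}(y_{1},y_{2},\dots;t)=\prod_{i=1}^{d}\prod_{j\ge 1}\frac{1-t\,x_{i}y_{j}}{1-x_{i}y_{j}},
\]
applied with $t=1/p$, with $x_{i}=uz/p^{i}$ for $1\le i\le d$, and with $y_{j}=p^{-(j-1)}$ for $j\ge 1$ (the left side is automatically supported on $r(\lambda)\le d$).

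With these choices each inner factor on the right becomes $\dfrac{1-uz/p^{i+j}}{1-uz/p^{i+j-1}}$, so the product over $j$ telescopes (in the ring of formal power series in $z$) to $\bigl(1-uz/p^{i}\bigr)^{-1}$. Hence
\[
F(z)=\prod_{i=1}^{d}\frac{1-u/p^{i}}{1-uz/p^{i}}=(u/p)_{d}\prod_{i=1}^{d}\frac{1}{1-uz/p^{i}}.
\]
I would then invoke the $q$-binomial theorem, which in the notation of the paper reads $\prod_{i=0}^{d-1}\bigl(1-w/p^{i}\bigr)^{-1}=\sum_{n\ge 0}\dfrac{(1/p)_{d+n-1}}{(1/p)_{d-1}(1/p)_{n}}\,w^{n}$, with $w=uz/p$, to obtain
\[
\prod_{i=1}^{d}\frac{1}{1-uz/p^{i}}=\sum_{n\ge 0}\frac{(1/p)_{d+n-1}}{(1/p)_{d-1}(1/p)_{n}}\Bigl(\frac{uz}{p}\Bigr)^{n}.
\]
Reading off the coefficient of $z^{n}$ in $F(z)$ then gives $\dfrac{u^{n}}{p^{n}}\cdot\dfrac{(u/p)_{d}\,(1/p)_{d+n-1}}{(1/p)_{d-1}(1/p)_{n}}$, which is the asserted formula. (As a sanity check, $F(1)=1$ since $\prod_{i=1}^{d}(1-u/p^{i})=(u/p)_{d}$.)

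I expect the identification of $p^{-n(\lambda)}$ with a principal specialization of $Q_{\lambda}$, plus the bookkeeping needed to carry the variable $z$ correctly through the homogeneity rescaling, to be the only points requiring care; the telescoping and the $q$-binomial theorem are routine, and treating every displayed infinite product as a formal power series in $z$ sidesteps any convergence question. A different route avoids Hall--Littlewood polynomials altogether: by Theorem \ref{markov} the conjugate sequence $\lambda'$ is a Markov chain, and conditioning on the value of $\lambda_{1}'$ shows that the size distribution for parameter $d$ satisfies $f_{d}(n)=\sum_{b}K(d,b)\,f_{b}(n-b)$, where $f_{a}(m)$ is the probability that a partition drawn from $P_{a,u}$ has size $m$; one can then verify the closed form by induction. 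This works but is more cumbersome than the generating-function computation above, so I would present the latter.
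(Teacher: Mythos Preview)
Your proof is correct. The approach is close to the paper's but not identical, and the differences are worth noting. The paper also starts from Proposition~\ref{HLform} and uses the homogeneity of $P_\lambda$, but instead of introducing an auxiliary variable $z$ it simply observes that the sought probability is $u^n(u/p)_d$ times the coefficient of $u^n$ in $\sum_\lambda P_\lambda(u/p,\dots,u/p^d;1/p)/p^{n(\lambda)}$, and then identifies this last sum with $1/(u/p)_d$ by invoking the already-established fact that $P_{d,u}$ is a probability distribution. The final coefficient extraction via the $q$-binomial theorem (Theorem~349 of \cite{HW}) is the same in both arguments. So the paper's proof is shorter and avoids both the identification $Q_\lambda(1,1/p,1/p^2,\dots;1/p)=p^{-n(\lambda)}$ and the explicit Cauchy identity; your version, on the other hand, is more self-contained because it does not presuppose the normalization $\sum_\lambda P_{d,u}(\lambda)=1$ (which the paper proves separately from the same Cauchy-type identity you use). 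Your extra variable $z$ is harmless but unnecessary: the role of $z$ is already played by $u$, since $P_\lambda$ is homogeneous of degree $|\lambda|$.
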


\begin{proof} By Proposition \ref{HLform}, the sought probability is equal to
\begin{eqnarray*}
\sum_{|\lambda|=n} P_{d,u}(\lambda) & = & (u/p)_d \sum_{|\lambda|=n} \frac{P_{\lambda}(\frac{u}{p},\frac{u}{p^2},\cdots,\frac{u}{p^d},0,\cdots ; \frac{1}{p})}{p^{n(\lambda)}} \\
& = & (u/p)_d \sum_{|\lambda|=n} u^n \frac{P_{\lambda}(\frac{1}{p},\frac{1}{p^2},\cdots,\frac{1}{p^d},0,\cdots ; \frac{1}{p})}{p^{n(\lambda)}} \\
& = & u^n (u/p)_d\ {\rm Coef. } u^n\ {\rm in}\ \sum_{\lambda} \frac{P_{\lambda}(\frac{u}{p},\frac{u}{p^2},\cdots,\frac{u}{p^d},0,\cdots ; \frac{1}{p})}{p^{n(\lambda)}} \\
& = & u^n (u/p)_d\ {\rm Coef.} u^n\ {\rm in}\ \frac{1}{(u/p)_d} \\
& = & \frac{u^n}{p^n} \frac{(u/p)_d (1/p)_{d+n-1}}{(1/p)_{d-1} (1/p)_n}.
\end{eqnarray*}
The fourth equality used Proposition \ref{HLform} and the fact that $P_{d,u}$ defines a probability distribution, and the final equality
used Theorem 349 of \cite{HW}.
\end{proof}


\begin{theorem} \label{parts_and_size}
The probability that $\lambda$ chosen from $P_{d,u}$ has size $n$ and $r \le \min\{d,n\}$ parts is equal
to \[ \frac{u^n (u/p)_d (1/p)_d}{p^{r^2} (1/p)_{d-r} (1/p)_r} \frac{(1/p)_{n-1}}
{p^{n-r} (1/p)_{r-1} (1/p)_{n-r}} .\]
\end{theorem}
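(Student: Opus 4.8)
The plan is to condition on the number of parts and exploit the Markov structure from Theorem \ref{markov}. Recall that the number of parts of $\lambda$ is $r(\lambda) = \lambda_1'$, the length of the first column, and that under the chain of Theorem \ref{markov} the event $\lambda_1' = r$ has probability $K(d,r)$, which is precisely the formula of Corollary \ref{numparts}. The key point is a restriction property of the chain: conditioned on $\lambda_1' = r$, the sequence $\lambda_1' \geq \lambda_2' \geq \lambda_3' \geq \cdots$ continues to evolve from state $r$ by the \emph{same} transition rule $K$, so the partition $\tilde\lambda$ whose column lengths are $(\lambda_2', \lambda_3', \dots)$ is distributed exactly according to $P_{r,u}$. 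Since $|\lambda| = \lambda_1' + \sum_{i \geq 2} \lambda_i' = r + |\tilde\lambda|$, conditioning yields
\[
P_{d,u}\bigl(|\lambda| = n,\ r(\lambda) = r\bigr) \;=\; K(d,r)\cdot P_{r,u}\bigl(|\tilde\lambda| = n-r\bigr).
\]

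Next I would evaluate the right-hand side using results already in hand. Theorem \ref{size}, applied with $d$ replaced by $r$ and $n$ replaced by $n-r$, gives
\[
P_{r,u}\bigl(|\tilde\lambda| = n-r\bigr) = \frac{u^{n-r}}{p^{n-r}}\cdot\frac{(u/p)_r\,(1/p)_{n-1}}{(1/p)_{r-1}\,(1/p)_{n-r}},
\]
and Corollary \ref{numparts} supplies $K(d,r)$. Multiplying the two expressions, the factor $(u/p)_r$ cancels, and collecting the powers of $u$ and $p$ together with the $(1/p)_\bullet$ symbols reproduces exactly the formula in the statement.

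The algebraic simplification is routine bookkeeping; the one step that deserves care is the restriction property of the chain, namely that stopping the process of Theorem \ref{markov} at the moment $\lambda_1' = r$ and continuing produces a bona fide copy of the $P_{r,u}$ process on $(\lambda_2', \lambda_3', \dots)$ --- this is immediate from the definition of the chain but should be stated. One should also dispose of the boundary cases $r = 0$ (which forces $n = 0$) and $n < r$ (where both sides vanish, consistent with the hypothesis $r \leq \min\{d,n\}$). An alternative, purely Hall--Littlewood route would mimic the computation in the proof of Theorem \ref{size}, summing $P_\lambda$ over partitions of $n$ with exactly $r$ parts; but isolating the number of parts in that setting seems less clean than the conditioning argument above.
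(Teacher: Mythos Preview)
Your argument is correct and takes a somewhat different route from the paper's. The paper proceeds by coefficient extraction: it writes
\[
\sum_{\substack{\lambda_1'=r\\|\lambda|=n}} P_{d,u}(\lambda)
= u^n (u/p)_d\ \text{Coef.}\ u^n\ \text{in}\ \frac{1}{(u/p)_d}\sum_{\lambda_1'=r} P_{d,u}(\lambda),
\]
applies Corollary \ref{numparts} to the inner sum, and then reads off the coefficient of $u^{n-r}$ in $1/(u/p)_r$ via the $q$-series identity (Theorem 349 of Hardy--Wright). You instead use the Markov-chain description directly: conditioning on $\lambda_1'=r$ and invoking the restriction property reduces the question to $P_{r,u}(|\tilde\lambda|=n-r)$, which Theorem \ref{size} already computes. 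Your approach is more transparently probabilistic and explains the product structure of the answer (it factors as $K(d,r)$ times a $P_{r,u}$-size probability), whereas the paper's version is a self-contained generating-function calculation that does not appeal to Theorem \ref{size}. Both ultimately rest on Corollary \ref{numparts} and the same $q$-identity; yours packages the latter inside the citation of Theorem \ref{size}.
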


\begin{proof} From the definition of $P_{d,u}$, one has that
\begin{eqnarray*}
\sum_{\substack{\lambda_1'=r \\ |\lambda|=n}} P_{d,u}(\lambda) & = & \sum_{\substack{\lambda_1'=r \\ |\lambda|=n}}
\frac{u^n (u/p)_d (1/p)_d}{|\Aut(\lambda)| (1/p)_{d-r}} \\
& = & u^n (u/p)_d \sum_{\substack{\lambda_1'=r \\ |\lambda|=n} }\frac{(1/p)_d}{|\Aut(\lambda)| (1/p)_{d-r}} \\
& = & u^n (u/p)_d\ {\rm Coef.}\ u^n\ {\rm in}\ \sum_{\lambda_1'=r} \frac{u^{|\lambda|} (1/p)_d}{|\Aut(\lambda)| (1/p)_{d-r}} \\
& = & u^n (u/p)_d\ {\rm Coef.}\ u^n\ {\rm in}\ \frac{1}{(u/p)_d} \sum_{\lambda_1'=r} P_{d,u}(\lambda) \\
& = & u^n (u/p)_d\ {\rm Coef.}\ u^n\ {\rm in}\ \frac{1}{(u/p)_d} \frac{u^r (1/p)_d (u/p)_d}{p^{r^2} (1/p)_{d-r} (1/p)_r (u/p)_r} \\
& = & \frac{u^n (u/p)_d (1/p)_d}{p^{r^2} (1/p)_{d-r} (1/p)_r}\ {\rm Coef.}\ u^{n-r}\ {\rm in}\ \frac{1}{(u/p)_r}\\
& = & \frac{u^n (u/p)_d (1/p)_d}{p^{r^2} (1/p)_{d-r} (1/p)_r} \frac{(1/p)_{n-1}}
{p^{n-r} (1/p)_{r-1} (1/p)_{n-r}}.
\end{eqnarray*} The fifth equality used Corollary \ref{numparts}, and the final equality used Theorem
349 of \cite{HW}.
\end{proof}

In the rest of this section we give another view of the distributions given by \eqref{FWform} and
\eqref{gen}. When $p$ is prime, equation (19) in \cite{petrogradsky} implies that
\begin{equation}
P_d(\lambda)  = \frac{1}{p^{|\lambda| d}}
\left(\prod_{i=1}^{\lambda_1} p^{\lambda_{i+1}' (d- \lambda_i')} \pbinom{d-\lambda_{i+1}'} {\lambda_i' - \lambda_{i+1}'}\right)  \prod_{i=1}^{d} (1-1/p^i).
\end{equation}
Comparing this to the expression for $P_d(\lambda)$ given in \eqref{FWform} shows that
\begin{equation}\label{petro_FW}
\frac{1}{p^{|\lambda| d}}
\left(\prod_{i=1}^{\lambda_1} p^{\lambda_{i+1}' (d- \lambda_i')} \pbinom{d-\lambda_{i+1}'} {\lambda_i' - \lambda_{i+1}'}\right)
=
\frac{1}{|\Aut(\lambda)|} \left(\prod_{i=d-r+1}^d (1-1/p^i) \right).
\end{equation}
A direct proof is given in Proposition 4.7 of \cite{CKK}.  Therefore, we get a second expression for $P_{d,u}(\lambda)$,
\begin{equation}\label{petro_pdu}
P_{d,u}(\lambda)  =  \frac{u^{|\lambda|}}{p^{|\lambda| d}}
\left(\prod_{i=1}^{\lambda_1} p^{\lambda_{i+1}' (d- \lambda_i')} \pbinom{d-\lambda_{i+1}'} {\lambda_i' - \lambda_{i+1}'}\right)  \prod_{i=1}^{d} (1-u/p^i).
\end{equation}
We give a combinatorial proof of \eqref{petro_FW} that applies for any real $p > 1$, so \eqref{petro_pdu} applies for any $p>1$ and $0<u<p$.
\begin{proof}[Proof of Equation \eqref{petro_FW}]
It is sufficient to show that for a partition $\lambda$ with $r \leq d$ parts,
\begin{equation} \label{mustshow} |\Aut(\lambda)| \left( \prod_{i=1}^{\lambda_1} p^{\lambda_{i+1}'(d-\lambda_i')} {d-\lambda_{i+1}' \choose \lambda_i' - \lambda_{i+1}'}_p \right) = p^{|\lambda| d} \prod_{j=0}^{r-1} (1-p^{-d+j}).
\end{equation}

Clearly
\begin{eqnarray*}
& & \prod_{i=1}^{\lambda_1} p^{\lambda_{i+1}'(d-\lambda_i')}  {d-\lambda_{i+1}' \choose \lambda_i' - \lambda_{i+1}'}_p \\
& = &
p^{d(|\lambda| - \lambda_1') - \sum_i \lambda_i' \lambda_{i+1}'} \prod_i {d-\lambda_{i+1}' \choose \lambda_i' - \lambda_{i+1}'}_p \\
& = & p^{d(|\lambda| - \lambda_1') - \sum_i \lambda_i' \lambda_{i+1}'} \frac{[d]_p!}{[d-\lambda_1']_p! [\lambda_1'-\lambda_2']_p!
[\lambda_2'-\lambda_3']_p! \cdots} \\
& = & p^{d(|\lambda| - \lambda_1') - \sum_i \lambda_i' \lambda_{i+1}'} \frac{(p-1)^{\lambda_1'} [d]_p!}
{[d-\lambda_1']_p! p^{\sum_i {\lambda_i'-\lambda_{i+1}'+1 \choose 2}} \prod_i (1/p)_{m_i(\lambda)}} \\
& = & \frac{p^{d(|\lambda|-\lambda_1')} (p-1)^{\lambda_1'} [d]_p!}{[d-\lambda_1']_p! p^{\frac{1}{2} [\sum_i (\lambda_i')^2
+(\lambda_{i+1}')^2+\lambda_i'-\lambda_{i+1}']} \prod_i (1/p)_{m_i(\lambda)}} \\
& = & \frac{p^{d(|\lambda|-\lambda_1')} p^{(\lambda_1')^2/2} (p-1)^{\lambda_1'} [d]_p!}{[d - \lambda_1']_p! p^{\lambda_1'/2}}
\cdot \frac{1}{p^{\sum_i (\lambda_i')^2} \prod_i (1/p)_{m_i(\lambda)}}.
\end{eqnarray*}

Since $\lambda_1'=r$, equation \eqref{AutDef} implies that the left-hand side of (\ref{mustshow}) is equal to
\begin{eqnarray*}
& & \frac{p^{d |\lambda| - dr +r^2/2-r/2} (p-1)^r [d]_p!}{[d-r]_p!} \\
& = & p^{d |\lambda| - dr +r^2/2-r/2} (p^d-1) \cdots (p^{d-r+1}-1),
\end{eqnarray*}
which simplifies to the right-hand side of (\ref{mustshow}).
\end{proof}

We now use the alternate expression of \eqref{petro_pdu} to give an additional proof of Theorem \ref{size} in the case when $p$ is prime.  The zeta function of $\Z^d$ is defined by
\[
\zeta_{\Z^d}(s) = \sum_{H \le \Z^d} [\Z^d:H]^{-s},
\]
where the sum is taken over all finite index subgroups of $\Z^d$.  It is known that
\es{\label{zeta}
\zeta_{\Z^d}(s)  = & \zeta(s) \zeta(s-1)\cdots \zeta(s-(d-1)) \\
 = & \prod_p \left((1-p^{-s})^{-1} (1-p^{-(s-1)})^{-1}  \cdots (1-p^{-(s-(d-1))})^{-1}\right),
}
where $\zeta(s)$ denotes the Riemann zeta function, and the product is taken over all primes.  See the book of Lubotzky and Segal for five proofs
of this fact \cite{lubotzky_segal}.

\begin{proof}[Second Proof of Theorem \ref{size} for $p$ prime]
From \eqref{petro_pdu}, we need only prove
\begin{equation}\label{secondproofeq}
\sum_{|\lambda| =  n}  \frac{u^n}{p^{n d}}
\left(\prod_{i=1}^{\lambda_1} p^{\lambda_{i+1}' (d- \lambda_i')} \pbinom{d-\lambda_{i+1}'} {\lambda_i' - \lambda_{i+1}'}\right) = \frac{u^n}{p^n} \frac{(1/p)_{d+n-1}}{(1/p)_{d-1} (1/p)_n}.
\end{equation}

Let $\lambda^* = (\lambda_1,\ldots, \lambda_1)$, where there are $d$ entries in the tuple.  The discussion around equation (19) in \cite{petrogradsky} says that the term in parentheses of the left-hand side of \eqref{secondproofeq} is equal to the number of subgroups of a finite abelian $p$-group of type $\lambda^*$ that have type $\lambda,\ n_{\lambda^*}(\lambda)$, which is also equal to the number of subgroups $\Lambda \subset \Z^d$ such that $\Z^d/\Lambda$ is a finite abelian $p$-group of type $\lambda$.

After some obvious cancelation, we need only show that
\[
\sum_{|\lambda| = n} n_{\lambda^*}(\lambda) = \frac{p^{n(d-1)} (1/p)_{d+n-1}}{(1/p)_{d-1} (1/p)_n}.
\]
The left-hand side is the number of subgroups $\Lambda \subset \Z^d$ such that $\Z^d/\Lambda$ has order $p^n$.  This is the $p^{-sn}$ coefficient of $\zeta_{\Z^d}(s)$. Using \eqref{zeta}, this is equal to
\begin{eqnarray*}
& &  \text{Coef. $p^{-sn}$ in \ }(1-p^{-s})^{-1} (1-p^{-(s-1)})^{-1}  \cdots (1-p^{-(s-(d-1))})^{-1} \\
& = & \text{Coef. $x^n$ in \ }(1-x)^{-1} (1-px)^{-1} (1-p^2 x)^{-1}  \cdots (1-p^{d-1} x))^{-1}.
\end{eqnarray*} By Theorem 349 of \cite{HW}, this is equal to
\[ \frac{p^{n(d-1)} (1/p)_{d+n-1}}{(1/p)_{d-1} (1/p)_n}, \] and the proof is complete.

\end{proof}

\section{Cokernels of random alternating $p$-adic matrices} \label{alter}

In this section we consider a distribution on finite abelian $p$-groups that arises in the study of cokernels of random $p$-adic alternating matrices.  We show that this is a special case of the distributions $P^p_{d,u}$.

Let $n$ be an even positive integer and let $A\in \Alt_n(\Z_p)$ be a random matrix chosen with respect to additive Haar measure on $\Alt_n(\Z_p)$.  The cokernel of $A$ is a finite abelian $p$-group of the form $G \cong H \times H$ for some $H$ of type $\lambda$ with at most $n/2$ parts, and is equipped with a nondegenerate alternating pairing $[\ ,\ ]\colon H \times H \mapsto \Q/\Z$.  Let $\Sp(G)$ be the group of automorphisms of $H$ respecting $[\ ,\ ]$.  Let $r$ be the number of parts of $\lambda$, and $|\lambda|$, $n(\lambda)$,
$m_i(\lambda)$ be as in Section \ref{secnot}.
\begin{lemma}\label{altlemma}
Let $n$ be an even positive integer and $A\in \Alt_n(\Z_p)$ be a random matrix chosen with respect to additive Haar measure on $\Alt_n(\Z_p)$.  The probability that the cokernel of $A$ is isomorphic to $G$ is given by
\begin{equation} \label{altmeas}
P^{\text{Alt}}_{n,p}(\lambda) = \frac{\prod_{i=n-2r+1}^n (1-1/p^i) \prod_{i=1}^{n/2-r} (1-1/p^{2i-1})}
{p^{|\lambda|+4n(\lambda)} \prod_i \prod_{j=1}^{m_i(\lambda)} (1-1/p^{2j})}.
\end{equation}
\end{lemma}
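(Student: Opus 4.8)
The plan is to compute this probability by reducing modulo $p^k$ and counting alternating matrices, following the strategy of \cite{BKLPR}. Since additive Haar measure on $\Alt_n(\Z_p)$ pushes forward to the uniform measure on $\Alt_n(\Z/p^k)$, and since for $k$ larger than the exponent of $G$ the event $\operatorname{cok}(A)\cong G$ depends only on $A\bmod p^k$,
\[
\pp\bigl(\operatorname{cok}(A)\cong G\bigr)=\lim_{k\to\infty}\frac{\#\{A\in\Alt_n(\Z/p^k):\operatorname{cok}(A)\cong G\}}{p^{k\binom{n}{2}}}.
\]
As recalled in the statement, $\operatorname{cok}(A)\cong H\oplus H$ and carries a canonical perfect alternating pairing, obtained by dualizing the exact sequence $0\to\Z_p^n\to\Z_p^n\to\operatorname{cok}(A)\to 0$ whose first map is $A$ and using $A^T=-A$, and $r(\lambda)\le n/2$. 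The extra point I would use is that every perfect alternating pairing on $H\oplus H$ is isometric to the hyperbolic one, so the events ``$\operatorname{cok}(A)\cong G$ as a group'' and ``$\operatorname{cok}(A)\cong G$ as a group equipped with its induced pairing'' coincide, and no sum over pairings is needed.

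Next I would convert the matrix count into a count of surjections together with the order of the isometry group $\Sp(G)$ defined in the statement. An alternating $A$ with cokernel the given pairing is encoded by a surjection $\pi\colon(\Z/p^k)^n\to G$ carrying the standard form to $[\ ,\ ]$ in the appropriate sense, counted up to the action of $\Sp(G)$, together with a local density factor accounting for the remaining freedom in $A$ once $\operatorname{im}(A)=\ker\pi$ is fixed. Carrying this out --- the step where I would quote, or re-derive by the methods of, \cite{BKLPR} --- should give
\[
\pp\bigl(\operatorname{cok}(A)\cong G\bigr)=\frac{\#\Surj(\Z_p^n,G)}{|G|^{\,n-1}\,|\Sp(G)|}\prod_{i=1}^{n/2-r}\bigl(1-p^{-(2i-1)}\bigr).
\]
Because $G=H\oplus H$ has $p$-rank $2r$, Nakayama's lemma gives $\#\Surj(\Z_p^n,G)=|G|^{n}\prod_{i=n-2r+1}^{n}(1-p^{-i})$, which is already the source of the factor $\prod_{i=n-2r+1}^{n}(1-p^{-i})$ in \eqref{altmeas}; the factor $\prod_{i=1}^{n/2-r}(1-p^{-(2i-1)})$ is the one displayed above.

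The remaining and main task is to evaluate $|\Sp(G)|$ explicitly when $H$ has type $\lambda$; I expect
\[
|\Sp(G)|=p^{\,3|\lambda|+4n(\lambda)}\prod_i\prod_{j=1}^{m_i(\lambda)}\bigl(1-p^{-2j}\bigr),
\]
which specializes to the usual order of $\Sp_{2m}(\Z/p^a)$ when $\lambda=(a^m)$. I would prove this either by citing a formula for the order of the symplectic group of a finite abelian $p$-group carrying a perfect alternating pairing, or by induction on the number of distinct part sizes of $\lambda$: peel off the largest part size, use the resulting filtration of $G$ by characteristic subgroups and the induced homomorphism to the symplectic group of the associated graded, and compute its kernel, whose order is an explicit power of $p$. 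Substituting $|\Sp(G)|$ and $\#\Surj(\Z_p^n,G)$ into the displayed probability, the powers of $p$ collapse via $|G|^{n}/|G|^{n-1}=|G|=p^{2|\lambda|}$ and $p^{2|\lambda|}/p^{3|\lambda|+4n(\lambda)}=p^{-|\lambda|-4n(\lambda)}$, matching the denominator exponent in \eqref{altmeas}, and the three $q$-products fall into place. The main obstacle is the bookkeeping in the $|\Sp(G)|$ computation, together with --- unless one simply cites \cite{BKLPR} --- the local density count over $\Z/p^k$ producing the $\prod_{i=1}^{n/2-r}(1-p^{-(2i-1)})$ factor.
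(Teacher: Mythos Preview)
Your approach is essentially the same as the paper's: both start from the formula
\[
\pp\bigl(\operatorname{cok}(A)\cong G\bigr)=\frac{|\Surj(\Z_p^n,G)|}{|\Sp(G)|}\,|G|^{1-n}\prod_{i=1}^{n/2-r}\bigl(1-p^{-(2i-1)}\bigr)
\]
taken from \cite{BKLPR}, then substitute the standard surjection count and the formula $|\Sp(G)|=p^{3|\lambda|+4n(\lambda)}\prod_i\prod_{j=1}^{m_i(\lambda)}(1-p^{-2j})$ and simplify. The only difference is packaging: the paper simply cites \cite{BKLPR} for the displayed probability, \cite{CL} for $|\Surj(\Z_p^n,G)|$, and \cite{Del} for $|\Sp(G)|$, whereas you sketch the reduction mod $p^k$ and propose an inductive computation of $|\Sp(G)|$ as an alternative to citation---but since you also allow yourself to cite, there is no substantive divergence.
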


\begin{proof}
Formula (6) and Lemma 3.6 of \cite{BKLPR} imply that the probability that the cokernel of $A$ is isomorphic to $G$ is given by
\[
\frac{\left|\Surj(\Z_p^n,G)\right|}{|\Sp(G)|} \prod_{i=1}^{n/2-r} (1-1/p^{2i-1}) |G|^{1-n}.
\]

We can rewrite this expression in terms of the partition $\lambda$. Clearly $|G|=p^{2 |\lambda|}$.  Proposition 3.1 of \cite{CL} implies that since $G$ has rank $2r$,
\[
|{\Surj}(\Z_p^n,G)| = p^{2n |\lambda|} \prod_{i=n-2r+1}^n (1-1/p^i).
\]

An identity on the bottom of page 538 of \cite{Del} says that,
\begin{eqnarray*}
|\Sp(G)| & = & p^{|\lambda|} p^{2 \sum_i (\lambda_i')^2} \prod_i \prod_{j=1}^{m_i(\lambda)} (1-1/p^{2j}) \\
& = & p^{4n(\lambda)+3|\lambda|} \prod_i \prod_{j=1}^{m_i(\lambda)} (1-1/p^{2j}).
\end{eqnarray*}
Putting these results together completes the proof.
\end{proof}

The next theorem shows that (\ref{altmeas}) is a special case of (\ref{gen}).

\begin{theorem}\label{AlternatingSpecialization}
Let $n$ be an even positive integer.  For any partition $\lambda$,
\[
P^{p^2}_{n/2,p}(\lambda) = P^{\text{Alt}}_{n,p}(\lambda).
\]

\end{theorem}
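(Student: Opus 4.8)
The plan is to prove the identity $P^{p^2}_{n/2,p}(\lambda) = P^{\text{Alt}}_{n,p}(\lambda)$ by writing both sides out explicitly and checking that they coincide term by term. On the right-hand side we have the formula \eqref{altmeas} from Lemma \ref{altlemma}. On the left-hand side we use \eqref{gen} (or equivalently the $(u)_i$-form given at the end of Section \ref{secnot}) with the substitutions $d = n/2$, $u = p^2$, and base parameter $p$ replaced by $p^2$; that is, every ``$1/p$'' in \eqref{gen} becomes ``$1/p^2$'' and every ``$u/p^i$'' becomes ``$p^2/p^{2i} = 1/p^{2(i-1)}$''. Writing $r = r(\lambda)$ for the number of parts, the left-hand side becomes
\[
P^{p^2}_{n/2,p}(\lambda) = \frac{(p^2)^{|\lambda|}}{(p^2)^{\sum(\lambda_i')^2}\prod_i (1/p^2)_{m_i(\lambda)}} \prod_{i=1}^{n/2}(1 - 1/p^{2(i-1)}) \prod_{i=n/2-r+1}^{n/2}(1 - 1/p^{2i}),
\]
where I must be careful that the Pochhammer symbols $(\cdot)_i$ are now taken with respect to base $p^2$, so $(1/p^2)_{m} = \prod_{j=1}^{m}(1 - 1/p^{2j})$.

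Next I would match the three groups of factors. The automorphism-type denominator: on the left we have $p^{2|\lambda|}/\big(p^{2\sum(\lambda_i')^2}\prod_i \prod_{j=1}^{m_i(\lambda)}(1-1/p^{2j})\big)$, and using $|\lambda| + 2n(\lambda) = \sum(\lambda_i')^2$ this exponent becomes $2|\lambda| - 2\sum(\lambda_i')^2 = -(2|\lambda| + 4n(\lambda))$, which matches the $p^{-|\lambda| - 4n(\lambda)}$... wait, more precisely $p^{2|\lambda| - 2\sum(\lambda_i')^2} = p^{2|\lambda| - 2|\lambda| - 4n(\lambda)} = p^{-4n(\lambda)}$; combined with the leading $p^{2|\lambda|}$ one recovers exactly the $p^{-(|\lambda|+4n(\lambda))}$ together with a leftover $p^{2|\lambda|}$ that I then need to reconcile against the surjection count — so the bookkeeping of powers of $p$ is where I need to be most careful. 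The $\prod_j (1-1/p^{2j})$ product in the denominator matches directly. The ``rank'' product $\prod_{i=n/2-r+1}^{n/2}(1-1/p^{2i})$ should match $\prod_{i=n-2r+1}^{n}(1-1/p^i)$: indeed the latter, since $n$ is even and the indices $i$ run over $n-2r+1, \ldots, n$, pairs up as $(1-1/p^{2k-1})(1-1/p^{2k})$ for $k = n/2-r+1, \ldots, n/2$, so it equals $\prod_{k=n/2-r+1}^{n/2}(1-1/p^{2k})\cdot\prod_{k=n/2-r+1}^{n/2}(1-1/p^{2k-1})$. The even-index part is exactly the left-hand ``rank'' product; the odd-index part $\prod_{k=n/2-r+1}^{n/2}(1-1/p^{2k-1})$ I then need to identify with the combination of the left-hand side's $\prod_{i=1}^{n/2}(1-1/p^{2(i-1)})$ factor against the right-hand side's $\prod_{i=1}^{n/2-r}(1-1/p^{2i-1})$ factor.

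So the crux is the identity
\[
\prod_{i=1}^{n/2}(1 - 1/p^{2(i-1)}) \;\overset{?}{=}\; \prod_{i=1}^{n/2-r}(1-1/p^{2i-1}) \cdot \prod_{k=n/2-r+1}^{n/2}(1-1/p^{2k-1})
\]
up to the remaining powers of $p$ that have to cancel — but note the left side has a factor $(1 - 1/p^{0}) = 0$ coming from $i=1$! This signals that I should instead track indices as $\prod_{i=1}^{n/2}(1-u/p^i)$ with $u = p^2$ directly: $(1 - p^2/p^i) = (1 - p^{2-i})$ for $i = 1, \ldots, n/2$, i.e. $(1-p), (1-1), (1-1/p), (1-1/p^2), \ldots, (1-1/p^{n/2-2})$. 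The $i=1$ term is $(1-p)$ and the $i=2$ term is $0$, so this product vanishes — which is wrong. Hence I must be more careful: the substitution is ``base $p \to p^2$'', so $(u/p)_d$ in the base-$p^2$ world means $\prod_{j=0}^{d-1}(1 - u \cdot p^{-2} \cdot p^{-2j})$ with $u = p^2$, giving $\prod_{j=0}^{d-1}(1 - p^{-2j}) = (1-1)(1-1/p^2)\cdots$ which still has a zero. The resolution, which I expect to be the main obstacle to get right, is that the correct reading of ``$P^{p^2}_{n/2,p}$'' forces $u$ to be chosen so that $0 < u < p^2$; since $u$ here is being set to a value making the measure degenerate-looking, I suspect the intended specialization actually has $u$ equal to something like $p$ or the formula \eqref{gen} must be read with $u$ a formal parameter and the vanishing factors cancel against poles in $1/|\mathrm{Aut}|$. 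I would resolve this by going back to the $(u)_i$ notation: $(u/p)_d = (1 - u/p)(1-u/p^2)\cdots(1-u/p^d)$ in base $p$, and the superscript $p^2$ means we evaluate $P_{d,u}$ with its internal ``$p$'' set to $p^2$; so $(u/p)_d \to \prod_{i=1}^d (1 - u/p^{2i})$ and with $u = p^2$ this is $\prod_{i=1}^d(1 - p^{2-2i}) = (1-1)(1-1/p^2)\cdots$, confirming a zero at $i=1$. Since $P^{\text{Alt}}$ is manifestly nonzero, the statement as I am reading it cannot be literally what is meant, so \textbf{the essential first step is to pin down the precise meaning of the specialization} — most likely $u$ is not $p^2$ but rather the parameter is chosen so the product telescopes correctly, e.g. the intended reading gives $\prod_{i=1}^{n/2}(1-1/p^{2i-1})$ for the ``$(u/p)_d$'' factor, matching the odd-index product above. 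Once the indexing convention is fixed so that the ``$\prod(1-u/p^i)$'' factor of \eqref{gen} becomes $\prod_{i=1}^{n/2}(1-1/p^{2i-1})$, the match with $P^{\text{Alt}}$ is then a direct termwise comparison: the $\prod(1-1/p^{2j})$ denominators agree, the $2r$ vs $r$ rank products agree after the even/odd splitting shown above, the leftover odd-index factors agree, and the remaining powers of $p$ cancel using $|\lambda| + 2n(\lambda) = \sum(\lambda_i')^2$ exactly as in the proof of Proposition \ref{HLform}. I would therefore structure the final proof as: (1) fix notation/convention for the base-$p^2$ specialization, (2) split $\prod_{i=n-2r+1}^n(1-1/p^i)$ into even- and odd-index parts, (3) match the three blocks of factors, (4) verify the power-of-$p$ bookkeeping via $|\lambda|+2n(\lambda)=\sum(\lambda_i')^2$.
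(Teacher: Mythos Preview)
Your main difficulty is a misreading of the notation. In the paper's convention $P^{p}_{d,u}$, the superscript is the base parameter and the \emph{second subscript} is $u$. Thus $P^{p^2}_{n/2,p}$ means base $p^2$, $d=n/2$, and $u=p$ --- not $u=p^2$. With $u=p$ the factor $\prod_{i=1}^{d}(1-u/p^i)$ in base $p^2$ becomes $\prod_{i=1}^{n/2}(1-p/p^{2i})=\prod_{i=1}^{n/2}(1-1/p^{2i-1})$, exactly the odd-index product you were hoping for, and no zero factor ever appears. Similarly $u^{|\lambda|}/(p^2)^{\sum(\lambda_i')^2}=p^{|\lambda|-2|\lambda|-4n(\lambda)}=p^{-|\lambda|-4n(\lambda)}$ immediately, with no ``leftover $p^{2|\lambda|}$'' to reconcile. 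So all of the detour through degenerate measures and formal parameters is unnecessary: the specialization is perfectly well-defined since $0<u=p<p^2$.

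Once the substitution is read correctly, your plan is exactly the paper's proof. Both sides are written out and compared termwise; the denominator $p^{|\lambda|+4n(\lambda)}\prod_i\prod_{j=1}^{m_i(\lambda)}(1-1/p^{2j})$ matches on the nose, and one is left with the product identity
\[
\prod_{i=1}^{n/2}(1-1/p^{2i-1})\prod_{i=n/2-r+1}^{n/2}(1-1/p^{2i})
=\prod_{i=n-2r+1}^{n}(1-1/p^{i})\prod_{i=1}^{n/2-r}(1-1/p^{2i-1}).
\]
Your even/odd splitting of $\prod_{i=n-2r+1}^{n}(1-1/p^{i})$ proves this directly; the paper instead multiplies both sides by $\prod_{k=1}^{n/2-r}(1-1/p^{2k})$ and observes that each side becomes $(1/p)_n$. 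These are trivially equivalent manipulations.
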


\begin{proof}
Rewrite (\ref{gen}) as
\[ \frac{u^{|\lambda|} \prod_{i=1}^d (1-u/p^i) \prod_{i=d-r+1}^d (1-1/p^i)}
{p^{2n(\lambda)+|\lambda|} \prod_i \prod_{j=1}^{m_i(\lambda)} (1-1/p^j)}.\]
Replacing $d$ by $n/2$, $u$ by $p$, and $p$ by $p^2$ gives
\[ \frac{\prod_{i=1}^{n/2} (1-1/p^{2i-1}) \prod_{i=n/2-r+1}^{n/2} (1-1/p^{2i})}
{p^{4 n(\lambda)+|\lambda|} \prod_i \prod_{j=1}^{m_i(\lambda)} (1-1/p^{2j})}.  \]

Comparing with (\ref{altmeas}), we see that it suffices to prove
\[
\prod_{i=1}^{n/2} (1-1/p^{2i-1}) \prod_{i=n/2-r+1}^{n/2} (1-1/p^{2i})
= \prod_{i=n-2r+1}^n (1-1/p^i) \prod_{i=1}^{n/2-r} (1-1/p^{2i-1}).
\]
To prove this equality, note that when each side
is multiplied by
\[
(1-1/p^2)(1-1/p^4) \cdots (1-1/p^{n-2r}),
\]
each side becomes $(1/p)_n$.
\end{proof}

\section{Cokernels of random symmetric $p$-adic matrices}\label{sym}

Let $A\in \Sym_n(\Z_p)$ be a random matrix chosen with respect to additive Haar measure on $\Sym_n(\Z_p)$.  Let $r$ be the number of parts of $\lambda$.  Theorem 2 of \cite{CKLPW} shows that the probability that the cokernel of $A$ has type $\lambda$ is equal to:
\begin{equation} \label{symform}
P^{\text{Sym}}_{n}(\lambda)  = \frac{\prod_{j=n-r+1}^n (1-1/p^j) \prod_{i=1}^{\lceil (n-r)/2 \rceil} (1-1/p^{2i-1})}{p^{n(\lambda)+|\lambda|} \prod_{i \geq 1} \prod_{j=1}^{\lfloor m_i(\lambda)/2 \rfloor} (1-1/p^{2j})}.
\end{equation}
Note that $P^{\text{Sym}}_{n}(\lambda)=0$  if $\lambda$ has more than $n$ parts.  As in earlier sections, when $p$ is prime \eqref{symform} has an interpretation in terms of finite abelian $p$-groups, but defines a distribution on partitions for any $p>1$. This follows directly from Theorem \ref{markovsym} below.

Taking $n \to \infty$ gives a distribution on partitions where $\lambda$ is chosen with probability
\begin{equation} \label{symforminf}
P^{\text{Sym}}_{\infty}(\lambda)  = \frac{\prod_{i \text{ odd }} (1-1/p^i)}{p^{n(\lambda)+|\lambda|} \prod_{i \geq 1} \prod_{j=1}^{\lfloor m_i(\lambda)/2 \rfloor} (1-1/p^{2j})}.
\end{equation}
The distribution of \eqref{symforminf} is studied in \cite{W}, where Wood shows that it arises as the distribution of $p$-parts of sandpile groups of large Erd\H{o}s-R\'enyi random graphs. Combinatorial properties of this distribution are considered in \cite{Fu3}, where it is shown that this distribution is a specialization of a two parameter family of distributions.  It is unclear whether the distribution of \eqref{symform} also sits within a larger family.

The following theorem allows one to generate partitions from the measure (\ref{symform}), and is a minor
variation on Theorem 3.1 of \cite{Fu3}.

\begin{theorem} \label{markovsym} Starting with $\lambda_0'=n$, define in succession $n \geq \lambda_1' \geq \lambda_2' \geq \cdots$ according to the rule that if $\lambda_l'=a$, then $\lambda_{l+1}'=b$ with probability
\[ K(a,b) = \frac{\prod_{i=1}^a (1-1/p^i)}{p^{{b+1 \choose 2}} \prod_{i=1}^b (1-1/p^i) \prod_{j=1}^{\lfloor (a-b)/2
\rfloor} (1-1/p^{2j})} .\] Then the resulting partition with at most $n$ parts is distributed according
to (\ref{symform}). \end{theorem}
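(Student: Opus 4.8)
The plan is to mimic the proof of Theorem~\ref{markov}. Running the chain from $\lambda_0'=n$ produces a weakly decreasing sequence $n=\lambda_0'\ge\lambda_1'\ge\lambda_2'\ge\cdots$ that is eventually $0$ (and $K(0,0)=1$, so the chain stays at $0$), and the probability of producing a fixed partition $\lambda$ with at most $n$ parts is the product $\prod_{l\ge 1}K(\lambda_{l-1}',\lambda_l')$, only finitely many factors of which differ from $1$. So the whole content is to show that this product equals $P^{\text{Sym}}_n(\lambda)$. Write $a_l=\lambda_l'$, so $a_0=n$, $a_1=r$, and $m_i(\lambda)=a_i-a_{i+1}$ for every $i\ge 1$.

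Next I would split the factors of $\prod_{l\ge 1}K(a_{l-1},a_l)$ into three pieces handled separately. (i) The numerators $\prod_{i=1}^{a_{l-1}}(1-1/p^i)$ cancel against the denominator factors $\prod_{i=1}^{a_l}(1-1/p^i)$ in telescoping fashion: shifting $l\mapsto l-1$ in the numerator leaves only $\prod_{i=1}^{a_0}(1-1/p^i)=\prod_{i=1}^{n}(1-1/p^i)$. (ii) The powers of $p$ in the denominators multiply to $p^{E}$ with
\[
E \;=\; \sum_{l\ge 1}\binom{a_l+1}{2} \;=\; \sum_{l\ge 1}\left(\binom{\lambda_l'}{2}+\lambda_l'\right) \;=\; n(\lambda)+|\lambda|,
\]
using $n(\lambda)=\sum_i\binom{\lambda_i'}{2}$ and $|\lambda|=\sum_i\lambda_i'$. (iii) In the remaining denominator product $\prod_{l\ge 1}\prod_{j=1}^{\lfloor (a_{l-1}-a_l)/2\rfloor}(1-1/p^{2j})$, peel off the $l=1$ term, namely $\prod_{j=1}^{\lfloor (n-r)/2\rfloor}(1-1/p^{2j})$; in the rest, the substitution $i=l-1$ together with $a_{l-1}-a_l=m_{l-1}(\lambda)$ turns it into $\prod_{i\ge 1}\prod_{j=1}^{\lfloor m_i(\lambda)/2\rfloor}(1-1/p^{2j})$, which is exactly the denominator appearing in \eqref{symform}.

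Putting (i)--(iii) together, $\prod_{l\ge 1}K(a_{l-1},a_l)$ equals
\[
\frac{\prod_{i=1}^{n}(1-1/p^i)}{p^{\,n(\lambda)+|\lambda|}\;\prod_{j=1}^{\lfloor (n-r)/2\rfloor}(1-1/p^{2j})\;\prod_{i\ge 1}\prod_{j=1}^{\lfloor m_i(\lambda)/2\rfloor}(1-1/p^{2j})},
\]
so to finish I only need the elementary identity
\[
\frac{\prod_{i=1}^{n}(1-1/p^i)}{\prod_{j=1}^{\lfloor (n-r)/2\rfloor}(1-1/p^{2j})}
\;=\;
\prod_{j=n-r+1}^{n}(1-1/p^j)\cdot\prod_{i=1}^{\lceil (n-r)/2\rceil}(1-1/p^{2i-1}).
\]
Pulling the factors $\prod_{j=n-r+1}^{n}(1-1/p^j)$ out of the numerator reduces this to $\prod_{i=1}^{n-r}(1-1/p^i)=\prod_{j=1}^{\lfloor (n-r)/2\rfloor}(1-1/p^{2j})\cdot\prod_{i=1}^{\lceil (n-r)/2\rceil}(1-1/p^{2i-1})$, which is just the split of $\prod_{i=1}^{n-r}(1-1/p^i)$ into its even- and odd-index factors (with the routine check that $2i-1\le n-r$ iff $i\le\lceil (n-r)/2\rceil$). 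The resulting expression is precisely \eqref{symform}, and since $\lambda_1'\le\lambda_0'=n$ the output automatically has at most $n$ parts.

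I do not anticipate a serious obstacle: as with Theorem~\ref{markov}, everything is cancellation and elementary identities, and the only place demanding care is the floor/ceiling bookkeeping in the parity split and the index shift $i=l-1$. The one external ingredient is that $K(a,\cdot)$ is an honest probability kernel, i.e. $\sum_{b=0}^{a}K(a,b)=1$; since this theorem is a minor variant of Theorem~3.1 of \cite{Fu3}, I would either invoke that result or note that $\sum_b K(a,b)=1$ is the appropriate specialization of a standard finite $q$-series identity. Granting this, the computation above shows the output has the claimed law and, as a byproduct, that \eqref{symform} is a probability distribution for every real $p>1$.
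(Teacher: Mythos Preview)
Your proposal is correct and follows the same approach as the paper: telescope the product $K(n,\lambda_1')K(\lambda_1',\lambda_2')\cdots$ to the same intermediate expression, then verify the same identity by splitting $\prod_{i=1}^{n-r}(1-1/p^i)$ into its even- and odd-indexed factors. The paper abbreviates your steps (i)--(iii) as ``there is a lot of cancelation'' and handles the final identity ``by breaking it into cases based on whether $n-r$ is even or odd,'' which is exactly your parity split; your remark that $\sum_{b}K(a,b)=1$ is needed is a fair point the paper leaves implicit.
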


\begin{proof} It is necessary to compute
\[ K(n,\lambda_1') K(\lambda_1',\lambda_2') K(\lambda_2',\lambda_3') \cdots \]
There is a lot of cancelation, and (recalling that $\lambda_1'=r$), what is left is:
\[
\frac{\prod_{j=1}^n (1-1/p^j)}{\prod_{j=1}^{\lfloor (n-r)/2 \rfloor} (1-1/p^{2j})}
 \frac{1}{p^{n(\lambda)+|\lambda|} \prod_{i \geq 1}
\prod_{j=1}^{\lfloor m_i(\lambda)/2 \rfloor} (1-1/p^{2j})}. \]

So to complete the proof, it is necessary to check that
\[ \frac{\prod_{j=1}^n (1-1/p^j)}{\prod_{j=1}^{\lfloor (n-r)/2 \rfloor} (1-1/p^{2j})}
= \prod_{j=n-r+1}^n (1-1/p^j) \prod_{i=1}^{\lceil (n-r)/2 \rceil} (1-1/p^{2i-1}). \]
This equation is easily verified by breaking it into cases based on whether $n-r$ is even
or odd.
\end{proof}

The following corollary is immediate.

\begin{cor} Let $\lambda$ be chosen from (\ref{symform}). Then the chance that $\lambda$ has
$r \leq n$ parts is equal to \[ \frac{\prod_{j=r+1}^n (1-1/p^j)}{p^{{r+1 \choose 2}}
\prod_{j=1}^{\lfloor (n-r)/2 \rfloor} (1-1/p^{2j})} .\]
\end{cor}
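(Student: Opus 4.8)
The statement to prove is the final corollary: if $\lambda$ is chosen from the symmetric measure $P^{\text{Sym}}_n$, then the probability that $\lambda$ has exactly $r \le n$ parts equals
\[
\frac{\prod_{j=r+1}^n (1-1/p^j)}{p^{\binom{r+1}{2}} \prod_{j=1}^{\lfloor (n-r)/2 \rfloor} (1-1/p^{2j})}.
\]
The key point is that $\lambda$ has exactly $r$ parts if and only if $\lambda_1' = r$, where $\lambda_1'$ is the size of the first column. So I want the probability that $\lambda_1' = r$ in the Markov chain of Theorem \ref{markovsym}.

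**Main step.** In the Markov chain of Theorem \ref{markovsym}, we start at $\lambda_0' = n$ and take one step to $\lambda_1'$. The event $\{\lambda_1' = r\}$ is precisely the event that the first transition goes from $a = n$ to $b = r$, and this has probability $K(n,r)$ by the transition rule. Substituting $a = n$, $b = r$ into the formula for $K(a,b)$ gives
\[
K(n,r) = \frac{\prod_{i=1}^n (1-1/p^i)}{p^{\binom{r+1}{2}} \prod_{i=1}^r (1-1/p^i) \prod_{j=1}^{\lfloor (n-r)/2 \rfloor} (1-1/p^{2j})}.
\]
Since $\dfrac{\prod_{i=1}^n (1-1/p^i)}{\prod_{i=1}^r (1-1/p^i)} = \prod_{j=r+1}^n (1-1/p^j)$, this is exactly the claimed expression. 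That is the whole proof; it is immediate from Theorem \ref{markovsym} in the same way that Corollary \ref{numparts} follows from Theorem \ref{markov}.

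**Obstacle.** There is essentially no obstacle: the only thing to check is that conditioning on $\lambda_1' = r$ really does capture "$\lambda$ has exactly $r$ parts," which is just the standard fact that the first column of the Young diagram of $\lambda$ has length equal to the number of parts of $\lambda$, together with the observation that subsequent transitions (which determine $\lambda_2', \lambda_3', \dots$) do not affect $\lambda_1'$ and, by construction of the chain, sum to a valid probability distribution over all continuations. So the one-line proof is: "The sought probability is $K(n,r)$, which simplifies to the stated expression." If one wants to be slightly more careful, one can note that the transition probabilities out of any state $a$ sum to $1$ (which is implicit in Theorem \ref{markovsym} producing a genuine probability distribution on partitions), so marginalizing over all later steps leaves exactly $K(n,r)$.
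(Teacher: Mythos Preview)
Your proof is correct and matches the paper's own argument exactly: the paper's proof is the single sentence ``By Theorem~\ref{markovsym}, the sought probability is equal to $K(n,r)$.'' Your additional simplification step, cancelling $\prod_{i=1}^r(1-1/p^i)$ against $\prod_{i=1}^n(1-1/p^i)$, is straightforward and correct.
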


\begin{proof} By Theorem \ref{markovsym}, the sought probability is equal to $K(n,r)$.
\end{proof}
Taking $n \to \infty$ in this result recovers Theorem 2.2 of \cite{Fu3}, which is also Corollary 9.4 of \cite{W}.

\section{Computation of $H$-moments}\label{secmom}

We recall that $\LL$ denotes the set of isomorphism classes of finite abelian $p$-groups and that a probability distribution $\nu$ on $\LL$ gives a probability distribution on the set of partitions in an obvious way.  Similarly, a measure on partitions gives a measure on $\LL$, setting $\nu(G) = \nu(\lambda)$ when $G$ is a finite abelian $p$-group of type $\lambda$.  When $G, H \in \LL$ we write $|\Surj(G,H)|$ for the number of surjections from any representative of the isomorphism class $G$ to any representative of the isomorphism class $H$.

Let $\nu$ be a probability measure on $\LL$.  For $H \in \LL$, the $H$-moment of $\nu$ is defined as
\[
\sum_{G\in \LL} \nu(G)  |\Surj(G,H)|.
\]
When $H$ is a finite abelian $p$-group of type $\mu$ this is
\[
\sum_{\lambda} \nu(\lambda)  |\Surj(\lambda,\mu)|.
\]
The distribution $\nu$ gives a measure on partitions and we refer to this quantity as the $\mu$-moment of the measure.  For an explanation of why these are called the moments of the distribution, see Section 3.3 of \cite{CKLPW}.

The Cohen-Lenstra distribution is the probability distribution on $\LL$ for which a finite abelian group $G$ of type $\lambda$ is chosen with probability $P_{\infty,1}(\lambda)$.  One of the most striking properties of the Cohen-Lenstra distribution is that the $H$-moment of $P_{\infty,1}$ is $1$ for every $H$, or equivalently, for any finite abelian $p$-group $H$ of type $\mu$,
\[
\sum_{\lambda} P_{\infty,1}(\lambda)  |\Surj(\lambda,\mu)| = 1.
\]
There is a nice algebraic explanation of this fact using the interpretation of $P_{\infty,1}$ as a limit of the $P_{d,1}$ distributions given by \eqref{FWform} (see for example \cite{Speyer}).

Lemma 8.2 of \cite{EVW} shows that the Cohen-Lenstra distribution is determined by its moments.
\begin{lemma}\label{MomentsDetermine}
Let $p$ be an odd prime.  If $\nu$ is any probability measure on $\LL$ for which
\[
\sum_{G \in \LL} \nu(G)  |\Surj(G,H)| = 1
\]
for any $H \in \LL$, then $\nu = P_{\infty,1}$.
\end{lemma}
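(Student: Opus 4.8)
The plan is to show that the $\mu$-moments $\bigl(\sum_\lambda \nu(\lambda)\,|\Surj(\lambda,\mu)|\bigr)_\mu$, ranging over all types $\mu$, uniquely determine the probabilities $\nu(\lambda)$, so that any $\nu$ with all moments equal to $1$ must agree with $P_{\infty,1}$, which we already know has all moments $1$. The key structural fact is that $|\Surj(\lambda,\mu)| = 0$ unless $\mu \subseteq \lambda$ in the sense that $G_\lambda$ admits a quotient of type $\mu$ (in particular $|\mu| \le |\lambda|$), and that for $\mu$ ``as large as possible'' inside $\lambda$ the surjection count is controlled. More precisely, I would fix $n$ and work with the finitely many partitions of size $\le n$, but the cleanest route is to triangularize: order partitions by size $|\lambda|$, and within a fixed size refine to a total order (e.g.\ reverse dominance). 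Then $|\Surj(\lambda,\mu)|$ is ``triangular'' — it vanishes unless $|\mu| \le |\lambda|$, and $|\Surj(\mu,\mu)| = |\Aut(\mu)| \ne 0$ while $|\Surj(\lambda,\mu)| = 0$ for $|\lambda| = |\mu|$, $\lambda \ne \mu$ (a surjection between groups of the same order is an isomorphism).

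First I would record that both $P_{\infty,1}$ and $\nu$ assign each partition a finite probability and, crucially, that the moment sums converge — this is where one needs an a priori bound. The subtlety is that $|\Surj(\lambda,\mu)|$ grows with $\lambda$, so $\sum_\lambda \nu(\lambda)|\Surj(\lambda,\mu)| < \infty$ is a genuine hypothesis, not automatic; it is built into the statement that the $H$-moment equals $1$. Next, set $\delta = \nu - P_{\infty,1}$, a finite signed measure of total mass $0$ with $\sum_\lambda \delta(\lambda)|\Surj(\lambda,\mu)| = 0$ for every $\mu$. I want to conclude $\delta \equiv 0$. Proceed by strong induction on $|\mu|$ to show $\delta(\mu) = 0$ for all $\mu$: taking $\mu = \emptyset$ gives $\sum_\lambda \delta(\lambda) = 0$ (no information yet beyond total mass zero), so instead induct on size from the top is impossible since there is no top; the right move is a summation/Möbius-inversion argument. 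Using the identity $|\Hom(\lambda,\mu)| = \sum_{\mu' \subseteq \mu} |\Surj(\lambda,\mu')|$ (sum over subgroups, or over quotient types, of $G_\mu$), one rewrites the surjection moments in terms of the simpler $\Hom$-moments $\sum_\lambda \nu(\lambda)|\Hom(\lambda,\mu)|$, which are multiplicative-friendly; but I think the most transparent argument is the one in \cite{EVW}: use that for each fixed $\mu$, the numbers $|\Surj(\lambda,\mu)|$ as $\lambda$ varies, together with the constraint that $\nu$ is a probability measure, pin down $\nu$ via a triangular linear system once we also invoke an analytic input controlling tails.

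The main obstacle, and the step I expect to require the most care, is the convergence/uniqueness interface: a signed measure with all moments zero need not vanish in general (moment problems can be indeterminate), so one must use something special about these moments. The resolution is that the functions $\lambda \mapsto |\Surj(\lambda,\mu)|$ do not grow too fast relative to how fast $P_{\infty,1}$ (and hence, one argues, any competitor $\nu$) decays: specifically $|\Surj(\lambda,\mu)| \le |\Hom(\lambda,\mu)| \le |\mu|^{r(\lambda)} \le |\mu|^{\,|\lambda|}$, and comparing with the fact that $\sum_\lambda P_{\infty,1}(\lambda)\,|\mu|^{|\lambda|}$ converges (by Theorem \ref{size}, the size-$n$ mass of $P_{\infty,1}$ decays like a constant times $p^{-n}$, so the series converges for $|\mu| < p$ — and for the Cohen-Lenstra case $p$ is an odd prime so $|\mu|$, being a power of $p$, is at least $p$... ) — so in fact one must be slightly cleverer, bounding $|\Surj(\lambda,\mu)|$ by $|\Aut(\mu)| \cdot n_\lambda(\mu) \le |\Aut(\mu)|\binom{r(\lambda)}{r(\mu)}_p \cdot (\text{bounded})$, which is only polynomial in the relevant geometric quantity. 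I would then assemble the triangular system level by level in $|\mu|$: knowing $\delta(\lambda) = 0$ for $|\lambda| < m$, the vanishing of the $\mu$-moment for $|\mu| = m$ reads $|\Aut(\mu)|\,\delta(\mu) + \sum_{|\lambda| > m} \delta(\lambda)|\Surj(\lambda,\mu)| = 0$, and one needs the tail sum to vanish — which is exactly the content of the analytic estimate, applied now to $\delta$ rather than to $\nu$ separately. This forces $\delta(\mu) = 0$, completing the induction and the proof. I would cite Lemma 8.2 of \cite{EVW} for the technical estimate and present the inductive triangularization as the conceptual skeleton.
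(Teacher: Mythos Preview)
Your proposal has a genuine gap, and it lies exactly where you sense trouble. The triangularity of $|\Surj(\lambda,\mu)|$ runs in the wrong direction for the induction you set up: the $\mu$-moment equation expresses $|\Aut(\mu)|\,\delta(\mu)$ in terms of $\delta(\lambda)$ for $|\lambda|>|\mu|$, so an inductive hypothesis on partitions of size $<m$ gives no leverage at all on the tail $\sum_{|\lambda|>m}\delta(\lambda)\,|\Surj(\lambda,\mu)|$. Your assertion that ``the tail sum vanishes --- which is exactly the content of the analytic estimate'' is not correct: neither the paper nor \cite{EVW} proves any such vanishing a priori, and in fact for each fixed $\mu$ that tail is exactly the nontrivial part of the moment equation. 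A M\"obius-inversion or level-by-level argument cannot close here, because there is no top level to start from and no hypothesis forcing tails to be small; this is a genuine indeterminate-moment-problem obstruction, not a technicality one can cite away.

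The paper's argument (which is the \cite{EVW} argument, given here in full generality as Theorem~\ref{MomentsDetermine2} with $d=\infty$, $u=1$) is structurally different: it is a contracting bootstrap applied to \emph{all} $\mu$ simultaneously, not an induction on $|\mu|$. From the moment equation one reads off $|\Aut(\mu)|\,\nu(\mu)\le 1$ for every $\mu$. Substituting this uniform upper bound back into the moment equation yields $|\Aut(\mu)|\,\nu(\mu)\ge 1-\beta$ for every $\mu$, where
\[
\beta \;=\; \prod_{i\ge 1}\bigl(1-1/p^i\bigr)^{-1}-1.
\]
Iterating gives the alternating partial sums $1,\ 1-\beta,\ 1-\beta+\beta^2,\ldots$, which sandwich $|\Aut(\mu)|\,\nu(\mu)$ and converge to $\prod_{i\ge1}(1-1/p^i)$ precisely when $\beta<1$. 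This is where the hypothesis ``$p$ odd'' enters: for $p\ge 3$ one has $\prod_{i\ge1}(1-1/p^i)^{-1}<2$, hence $\beta<1$; for $p=2$ the product exceeds $2$ and this argument fails. Your sketch never isolates where $p$ odd is used, which is another symptom that the mechanism you describe is not the one that actually works.
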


Our next goal is to compute the moments for the measure $P_{d,u}$; see Theorem \ref{momgen} below.
Our method is new even in the case $P_{\infty,1}$.

There has been much recent interest in studying moments of distributions related to the Cohen-Lenstra distribution, and showing that these moments determine a unique distribution \cite{BW, W, W3}.  At the end of this section, we add to this discussion by proving a version of Lemma \ref{MomentsDetermine} for the distribution $P_{d,u}$.

The following lemma counts the number of surjections from $G$ to $H$. Recall that $n_{\lambda}(\mu)$ is the
number of subgroups of type $\mu$ of a finite abelian group of type $\lambda$.

\begin{lemma} \label{nathan}
Let $G,H$ be finite abelian $p$-groups of types $\lambda$ and $\mu$
respectively. Then
\[
|\Surj(G,H)| = |\Surj(\lambda,\mu)|=  n_{\lambda}(\mu)
|\Aut(\mu)|.
\]
\end{lemma}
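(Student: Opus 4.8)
The plan is to prove the identity $|\Surj(G,H)| = n_\lambda(\mu)\,|\Aut(\mu)|$ by a simple counting argument that organizes the set of surjections $G \twoheadrightarrow H$ according to their kernels. The key observation is that every surjective homomorphism $\varphi\colon G \to H$ has a kernel $K \subseteq G$ with $G/K \cong H$, hence $G/K$ has type $\mu$; conversely, once $K$ is fixed with $G/K$ of type $\mu$, the surjections $G \to H$ with kernel exactly $K$ are in bijection with the isomorphisms $G/K \xrightarrow{\sim} H$, and there are exactly $|\Aut(\mu)| = |\Aut(H)|$ of those (compose any one fixed isomorphism with an arbitrary automorphism of $H$). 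Thus
\[
|\Surj(G,H)| = \sum_{\substack{K \subseteq G \\ G/K \cong H}} |\Aut(\mu)| = \big(\#\{K \subseteq G : G/K \text{ has type } \mu\}\big)\cdot |\Aut(\mu)|.
\]

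The remaining step is to identify the count of subgroups $K \subseteq G$ with $G/K$ of type $\mu$ with the quantity $n_\lambda(\mu)$, which is defined in Section~\ref{secnot} as the number of subgroups of type $\mu$ in a finite abelian $p$-group of type $\lambda$. This is exactly the duality statement already recalled in the notation section: for a finite abelian group $G$ of type $\lambda$, the number of subgroups $H \subseteq G$ of type $\mu$ equals the number of subgroups for which $G/H$ has type $\mu$ (citing \cite[Equation (1.5), page 181]{Mac}). Applying this, $\#\{K \subseteq G : G/K \text{ of type } \mu\} = n_\lambda(\mu)$, and combining with the display above yields the claimed formula. Since $|\Surj(G,H)|$ and $|\Aut(\mu)|$ depend only on the isomorphism types $\lambda$ and $\mu$, the notation $|\Surj(\lambda,\mu)|$ is well-defined and the first equality of the statement is automatic.

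The only point requiring any care — and the one I would flag as the main (minor) obstacle — is the bijection between $\{\varphi \in \Surj(G,H) : \ker\varphi = K\}$ and $\operatorname{Isom}(G/K, H)$: this uses the universal property of the quotient, namely that $\varphi$ factors uniquely as $G \to G/K \xrightarrow{\bar\varphi} H$ with $\bar\varphi$ an isomorphism precisely when $\varphi$ is surjective with kernel $K$. Given a fixed isomorphism $\theta\colon G/K \to H$, the map $\alpha \mapsto \alpha \circ \theta$ is a bijection $\Aut(H) \to \operatorname{Isom}(G/K,H)$, which gives the factor $|\Aut(H)| = |\Aut(\mu)|$. Everything else is bookkeeping, so the proof is short.
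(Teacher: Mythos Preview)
Your proof is correct, but it takes a different route from the one the paper sketches. The paper's argument (citing Wright) passes through Pontryagin duality at the level of homomorphisms: a surjection $G\twoheadrightarrow H$ corresponds to an injection $\widehat{H}\hookrightarrow\widehat{G}$, and since $\widehat{G}$ has type $\lambda$ and $\widehat{H}$ has type $\mu$, one counts injections by their image, giving $n_\lambda(\mu)\cdot|\Aut(\mu)|$ directly. Your argument instead stays on the covariant side, organizing surjections by kernel via the first isomorphism theorem, and then invokes the identity from \cite[Equation (1.5), page 181]{Mac} (already recorded in Section~\ref{secnot}) that the number of subgroups of $G$ with quotient of type $\mu$ equals $n_\lambda(\mu)$. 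Both are short; the paper's version makes the role of duality explicit in a single step, while yours is more self-contained given the material already in the paper but pushes the duality into the cited Macdonald identity (which is itself typically proved by the same Pontryagin-dual argument). Either way the lemma follows in a few lines.
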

For a proof, see page 28 of \cite{Wright}.  The main idea is that $|\Surj(G,H)|$ is the number of \emph{injective} homomorphisms from $\widehat{H}$ to $\widehat{G}$, where these are the dual groups of $H$ and $G$, respectively.  The image of such a homomorphism is a subgroup of $\widehat{G}$ of type $\mu$.

The distributions $P_{d,u}$ are defined for all $p>1$.  It is not immediately clear what the $\mu$-moment of this distribution should mean when $p$ is not prime, since $|\Surj(\lambda,\mu)|$ is defined in terms of surjective homomorphisms between finite abelian $p$-groups.  In \eqref{AutDef} we saw how to define $|\Aut(\lambda)|$ in terms of the parts of the partition $\lambda$ and the parameter $p$, even in the case where $p$ is not prime.  Similarly, Lemma \ref{nathan} gives a way to define $|\Surj(\lambda, \mu)|$ in terms of the parameter $p$ and the partitions $\lambda$ and $\mu$ even when $p$ is not prime.  We first define $|\Aut(\mu)|$ using \eqref{AutDef}, and then note that $n_{\lambda}(\mu)$ is a polynomial in $p$ that we can evaluate for any $p>1$.

\begin{theorem} \label{momgen}
The $\mu$-moment of the distribution $P_{d,u}$ is equal to
\begin{equation*}
\left\{ \begin{array}{ll}
\frac{u^{|\mu|} (1/p)_d}{(1/p)_{d-r(\mu)}} & \mbox{if $r(\mu) \leq d$} \\
0 & \mbox{otherwise}.
\end{array} \right.
\end{equation*}
Here, as above, $r(\mu)$ denotes the number of parts of $\mu$.
\end{theorem}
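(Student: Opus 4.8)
The plan is to reduce the $\mu$-moment to an identity for Hall--Littlewood polynomials and then evaluate that identity using Proposition~\ref{HLform}. First dispose of the case $r(\mu) > d$: every partition $\lambda$ with $P_{d,u}(\lambda) \neq 0$ has at most $d$ parts, and a finite abelian $p$-group of type $\lambda$ cannot have a subgroup of type $\mu$ when $r(\mu) > r(\lambda)$ (every subgroup has rank at most $r(\lambda)$), so $n_\lambda(\mu) = 0$ for all such $\lambda$ and, by Lemma~\ref{nathan}, the $\mu$-moment vanishes. So assume $r(\mu) \le d$. By Lemma~\ref{nathan}, $|\Surj(\lambda,\mu)| = n_\lambda(\mu)\,|\Aut(\mu)|$, and by Proposition~\ref{HLform}, $P_{d,u}(\lambda) = (u/p)_d\, p^{-n(\lambda)}\, P_\lambda(u/p, u/p^2, \ldots, u/p^d, 0, \ldots; 1/p)$. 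Hence the $\mu$-moment equals $|\Aut(\mu)|\,(u/p)_d\cdot S$, where
\[
S := \sum_\lambda n_\lambda(\mu)\, p^{-n(\lambda)}\, P_\lambda(u/p, u/p^2, \ldots, u/p^d, 0, \ldots; 1/p).
\]

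The heart of the argument is the identity
\[
S = \Big(p^{-n(\mu)}P_\mu(u/p, \ldots, u/p^d, 0, \ldots; 1/p)\Big)\cdot \Big(\sum_\nu p^{-n(\nu)}P_\nu(u/p, \ldots, u/p^d, 0, \ldots; 1/p)\Big).
\]
To prove it, let $g^\lambda_{\mu\nu}(p)$ be the Hall polynomial counting subgroups $N$ of a group $G$ of type $\lambda$ with $N$ of type $\nu$ and $G/N$ of type $\mu$. Classifying a subgroup of type $\mu$ by the type $\nu$ of the corresponding quotient and using the duality recalled in Section~\ref{secnot} gives $n_\lambda(\mu) = \sum_\nu g^\lambda_{\mu\nu}(p)$. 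The dictionary between the Hall algebra and Hall--Littlewood polynomials (Macdonald~\cite[Ch.~III]{Mac}) states that $P_\mu(x;t)P_\nu(x;t) = \sum_\lambda f^\lambda_{\mu\nu}(t)P_\lambda(x;t)$ together with $g^\lambda_{\mu\nu}(p) = p^{\,n(\lambda)-n(\mu)-n(\nu)}f^\lambda_{\mu\nu}(1/p)$. Substituting both into $S$, the powers $p^{n(\lambda)}$ and $p^{-n(\lambda)}$ cancel and $S = p^{-n(\mu)}\sum_\nu p^{-n(\nu)}\sum_\lambda f^\lambda_{\mu\nu}(1/p)P_\lambda = p^{-n(\mu)}P_\mu\cdot\sum_\nu p^{-n(\nu)}P_\nu$, everything evaluated at $(u/p, \ldots, u/p^d, 0, \ldots)$ with $t=1/p$. (Conceptually this is the image of the formal identity $u_\mu\sum_\nu u_\nu = \sum_\lambda n_\lambda(\mu)u_\lambda$ in the Hall algebra under the isomorphism $u_\lambda \mapsto p^{-n(\lambda)}P_\lambda(\,\cdot\,;1/p)$.) The rearrangement is legitimate term by term in $u$, since $P_\lambda(u/p,\ldots;1/p)$ is homogeneous of degree $|\lambda|$ in $u$, so the coefficient of each power of $u$ on both sides is a finite sum.

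To conclude, the second factor in the displayed identity equals $\sum_\nu P_{d,u}(\nu)/(u/p)_d = 1/(u/p)_d$ by the normalization noted after Proposition~\ref{HLform} (Example~1 on page~225 of~\cite{Mac}), while the first factor equals $p^{-n(\mu)}P_\mu(u/p,\ldots;1/p) = P_{d,u}(\mu)/(u/p)_d$ by Proposition~\ref{HLform}. Hence $S = P_{d,u}(\mu)/(u/p)_d^2$, so the $\mu$-moment equals $|\Aut(\mu)|\,(u/p)_d\cdot S = |\Aut(\mu)|\,P_{d,u}(\mu)/(u/p)_d$. Inserting $P_{d,u}(\mu) = \frac{u^{|\mu|}(u/p)_d}{|\Aut(\mu)|}\cdot\frac{(1/p)_d}{(1/p)_{d-r(\mu)}}$, which is the formula for $P_{d,u}$ from Section~\ref{secnot} combined with \eqref{AutDef}, cancels $|\Aut(\mu)|$ and $(u/p)_d$ and yields $u^{|\mu|}(1/p)_d/(1/p)_{d-r(\mu)}$, as claimed.

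I expect the main obstacle to be bookkeeping with conventions rather than any genuine difficulty: one must use the correct form of the Hall-algebra/Hall--Littlewood dictionary, in particular the exponent $n(\lambda)-n(\mu)-n(\nu)$ and the specialization $t = 1/p$ (prime power $q = p$), since an error there would corrupt the whole calculation. The case $\mu = \nu = (1)$, where $g^{(1,1)}_{(1)(1)}(p) = p+1$, $g^{(2)}_{(1)(1)}(p)=1$, $n((1,1))=1$, $n((2))=0$, and $P_{(1)}^2 = P_{(2)} + (1+t)P_{(1,1)}$, is a convenient consistency check. Everything else is routine identities among the symbols $(x)_i$.
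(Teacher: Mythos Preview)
Your proof is correct and follows essentially the same route as the paper's: rewrite the $\mu$-moment via Lemma~\ref{nathan} and Proposition~\ref{HLform}, decompose $n_\lambda(\mu)=\sum_\nu g^\lambda_{\mu\nu}(p)$ (the paper's $n_\lambda(\mu,\nu)$), apply the Hall-algebra/Hall--Littlewood identity from \cite[Ch.~III]{Mac} to factor the sum as $p^{-n(\mu)}P_\mu\cdot\sum_\nu p^{-n(\nu)}P_\nu$, and then use the normalization $\sum_\nu P_{d,u}(\nu)=1$ to collapse the second factor. The only differences are cosmetic: you treat the case $r(\mu)>d$ and the rearrangement of sums explicitly, and you finish by plugging in the closed form of $P_{d,u}(\mu)$ rather than citing the specific pages of Macdonald.
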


\begin{proof} Clearly we can suppose that $r(\mu) \leq d$.
By Lemma \ref{nathan}, the $\mu$-moment of the distribution $P_{d,u}$ is equal to
\[
\sum_{\lambda} P_{d,u}(\lambda) |\Surj(\lambda,\mu)| = |\Aut(\mu)| \sum_{\lambda}
P_{d,u}(\lambda)n_{\lambda}(\mu) .
\]

Let $n_{\lambda}(\mu,\nu)$ be the number of subgroups $M$ of $G$ so that $M$ has type $\mu$ and
$G/M$ has type $\nu$. This is a polynomial in $p$ (see Chapter II Section 4 of \cite{Mac}). Then by
Proposition \ref{HLform}, the $\mu$-moment becomes
\[
|\Aut(\mu)| \prod_{i=1}^d (1-u/p^i)\cdot \sum_{\lambda} \frac{P_{\lambda}(\frac{u}{p},\frac{u}{p^2},
\cdots,\frac{u}{p^d},0,\cdots;\frac{1}{p})}{p^{n(\lambda)}} \sum_{\nu} n_{\lambda}(\mu,\nu).
\]
Reversing the order of summation, this becomes
\[
|\Aut(\mu)| \prod_{i=1}^d (1-u/p^i) \cdot \sum_{\nu} \sum_{\lambda} \frac{P_{\lambda}(\frac{u}{p},\frac{u}{p^2},
\cdots,\frac{u}{p^d},0,\cdots;\frac{1}{p})}{p^{n(\lambda)}} n_{\lambda}(\mu,\nu).
\]

From Section 3.3 of \cite{Mac}, it follows that for any values of the $x$ variables,
\[ \sum_{\lambda} n_{\lambda}(\mu,\nu) \frac{P_{\lambda}(x;\frac{1}{p})}{p^{n(\lambda)}}
= \frac{P_{\mu}(x;\frac{1}{p})}{p^{n(\mu)}} \frac{P_{\nu}(x;\frac{1}{p})}{p^{n(\nu)}}.\]
Specializing $x_i=u/p^i$ for $i=1,\cdots,d$ and $0$ otherwise, it follows that the $\mu$-moment
of $P_{d,u}$ is equal to
\begin{eqnarray*}
& & |\Aut(\mu)| \prod_{i=1}^d (1-u/p^i)\cdot  \sum_{\nu} \frac{P_{\mu}(\frac{u}{p},\frac{u}{p^2},
\cdots,\frac{u}{p^d},0,\cdots;\frac{1}{p})}{p^{n(\mu)}} \\
& & \cdot \frac{P_{\nu}(\frac{u}{p},\frac{u}{p^2},
\cdots,\frac{u}{p^d},0,\cdots;\frac{1}{p})}{p^{n(\nu)}} \\
& = & |\Aut(\mu)| \frac{P_{\mu}(\frac{u}{p},\frac{u}{p^2},
\cdots,\frac{u}{p^d},0,\cdots;\frac{1}{p})}{p^{n(\mu)}} \\
& & \cdot \sum_{\nu} \prod_{i=1}^d (1-u/p^i) \cdot \frac{P_{\nu}(\frac{u}{p},\frac{u}{p^2},
\cdots,\frac{u}{p^d},0,\cdots;\frac{1}{p})}{p^{n(\nu)}}.
\end{eqnarray*}

By Proposition \ref{HLform}, this is equal to
\[
|\Aut(\mu)| \frac{P_{\mu}(\frac{u}{p},\frac{u}{p^2},
\cdots,\frac{u}{p^d},0,\cdots;\frac{1}{p})}{p^{n(\mu)}}.
\]
By pages 181 and 213 of \cite{Mac},
this simplifies to
\[ \frac{u^{|\mu|} (1/p)_d}{(1/p)_{d-r(\mu)}}.  \]
\end{proof}

{\it Remarks:}
\begin{itemize}
\item The exact same argument proves the analogous result for the distribution $P_{\infty, u}$.
\item Setting $d=\infty$ and $u = 1/p^w$ (with $w$ a positive integer)
gives the distribution \eqref{Pwdist}, and in this case Theorem \ref{momgen} recovers Lemma 3.2 of \cite{W2}.

\item The argument used in the proof of Theorem \ref{momgen} does not
require that $p$ is prime.
\end{itemize}

We use Theorem \ref{momgen} to determine the expected number of $p^{\ell}$-torsion elements of a finite abelian group $H$ drawn from $P_{d,u}$.  Let $T_{\ell}$ be defined by
\[
T_{\ell}(H) = |H[p^{\ell}]| = |\{x\in H\colon p^\ell \cdot x = 0\}|.
\]
The number of elements of $H$ of order exactly $p^{\ell}$ is $T_\ell(H) - T_{\ell-1}(H)$.

For a finite abelian $p$-group $H$, let $r_{p^k}(H)$ denote the $p^k$-rank of $H$, that is,
\[
r_{p^k}(H) = \dim_{\Z/p\Z}\left(p^{k-1}H/p^k H\right).
\]
If $H$ is of type $\lambda$, then $r_{p^k}(H) = \lambda_k'$, the number of parts of $\lambda$ of size at least $k$.  The number of parts of $\lambda$ of size exactly $k$ is $\lambda_{k}' - \lambda_{k+1}'$.  It is clear that
\[
T_{\ell}(H) = p^{r_p(H) + r_{p^2}(H) + \cdots + r_{p^\ell}(H)} = p^{\lambda_1'+\lambda_2' + \cdots + \lambda_\ell'}.
\]

\begin{theorem}\label{torspts}
Let $p$ be a prime, $\ell$ be a positive integer, and $0 < u < p$. The expected value of $T_{\ell}(H)$ for a finite abelian $p$-group $H$ drawn from $P_{d,u}$ is
\[
(u^{\ell}+u^{\ell-1} + \cdots + u)(1-p^{-d}) + 1.
\]
The expected value of $T_{\ell}(H) - T_{\ell-1}(H)$ is $u^{\ell}(1-p^{-d})$.
\end{theorem}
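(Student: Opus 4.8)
The plan is to reduce both assertions to the moment computation of Theorem \ref{momgen}. Fix a finite abelian $p$-group $H$ of type $\lambda$, and for an integer $j \geq 1$ write $(j)$ for the partition with a single part equal to $j$, so that $\Z/p^j\Z$ has type $(j)$. The basic point I would establish is that for each $j \geq 1$ the number of elements of $H$ of order exactly $p^j$ equals $|\Surj(\lambda,(j))|$. Indeed, every element of order $p^j$ generates a cyclic subgroup of order $p^j$, each such subgroup has exactly $\varphi(p^j)$ generators, and distinct cyclic subgroups of order $p^j$ share no generators; hence the number of elements of order $p^j$ is $n_\lambda((j))\,\varphi(p^j)$. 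Since \eqref{AutDef} gives $|\Aut((j))| = p^j(1-1/p) = \varphi(p^j)$, Lemma \ref{nathan} turns this into $n_\lambda((j))\,|\Aut((j))| = |\Surj(\lambda,(j))|$. (Equivalently, one may start from $T_\ell(H) = |H[p^\ell]| = |\Hom(\Z/p^\ell\Z, H)|$ and split homomorphisms according to their cyclic image; this leads to the same identity.)

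Partitioning $H[p^\ell]$ according to the exact order of each element — the identity accounting for the summand $1$ — I would then record, for every type $\lambda$,
\[
T_\ell(H) = 1 + \sum_{j=1}^{\ell} |\Surj(\lambda,(j))|, \qquad T_\ell(H) - T_{\ell-1}(H) = |\Surj(\lambda,(\ell))|.
\]
(When the largest part of $\lambda$ is smaller than $j$, both the corresponding summand and the relevant order count vanish, so these formulas remain correct.) Taking expectations with respect to $P_{d,u}$ and using linearity, the expected value of $T_\ell(H)$ becomes $1$ plus the sum of the $(j)$-moments of $P_{d,u}$ over $1 \leq j \leq \ell$, while the expected value of $T_\ell(H) - T_{\ell-1}(H)$ is exactly the $(\ell)$-moment.

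Finally I would apply Theorem \ref{momgen}. Since $r((j)) = 1$, for $d \geq 1$ the $(j)$-moment of $P_{d,u}$ equals $\frac{u^{j} (1/p)_d}{(1/p)_{d-1}} = u^{j}(1-p^{-d})$ (and the case $d = 0$, where $P_{d,u}$ is the point mass on the trivial group, is immediate and consistent). Summing over $j$ gives $\E[T_\ell(H)] = 1 + (1-p^{-d})(u + u^2 + \cdots + u^\ell)$ and $\E[T_\ell(H) - T_{\ell-1}(H)] = u^\ell(1-p^{-d})$. I expect the only step requiring genuine care to be the identification in the first paragraph of the count of elements of exact order $p^j$ with $|\Surj(\lambda,(j))|$; once that is in place, the rest is linearity of expectation together with a direct substitution into Theorem \ref{momgen}.
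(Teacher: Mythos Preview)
Your proposal is correct and follows essentially the same strategy as the paper: identify $T_\ell(H)-T_{\ell-1}(H)$ with $|\Surj(H,\Z/p^\ell\Z)|$ and then invoke Theorem~\ref{momgen} with $\mu=(\ell)$. The only difference is in how that identification is made---the paper proves Lemma~\ref{counthoms} ($T_\ell(H)=\#\Hom(H,\Z/p^\ell\Z)$) and uses the filtration of $\Hom$ by image, while you count elements of exact order $p^j$ via $n_\lambda((j))\,\varphi(p^j)$ and apply Lemma~\ref{nathan}; the two routes are equivalent and equally short.
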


{\it Remarks:}
\begin{itemize}
\item The exact same argument proves the analogous result for the distribution $P_{\infty, u}$.

\item Taking $d = \infty,\ u = p^{-w}$ recovers a result of Delaunay, the first part of Corollary 3 of \cite{Del}.  Delaunay's result generalizes work of Cohen and Lenstra for $P_{\infty,1}$ and $P_{\infty,1/p}$ \cite{CL}.

 \item Theorem \ref{momgen} can likely be used to compute moments of more complicated functions involving $T_{\ell}(H)$ giving results similar to those of Delaunay and Jouhet in \cite{DelJou}. We do not pursue this further here.
 \end{itemize}

\begin{lemma}\label{counthoms}
Let $H$ be a finite abelian $p$-group of type $\lambda$ and let $\ell \ge 1$.  Then
\[
\#\Hom(H,\Z/p^{\ell}\Z) = p^{r_{p^\ell}(H) + r_{p^{\ell-1}}(H) + \cdots + r_p(H)} =
p^{\lambda_1'+\lambda_2' + \cdots + \lambda_\ell'} = T_{\ell}(H).
\]
\end{lemma}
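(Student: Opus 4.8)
The plan is to reduce to the cyclic case via the primary decomposition of $H$ and then invoke the elementary description of homomorphisms between cyclic $p$-groups. Write $H \cong \Z/p^{\lambda_1}\Z \times \cdots \times \Z/p^{\lambda_r}\Z$, where $\lambda = (\lambda_1,\ldots,\lambda_r)$. Since $\Hom(-,\Z/p^{\ell}\Z)$ carries finite direct sums to finite direct products,
\[
\#\Hom(H,\Z/p^{\ell}\Z) = \prod_{i=1}^r \#\Hom(\Z/p^{\lambda_i}\Z,\Z/p^{\ell}\Z).
\]
First I would record the standard fact that $\Hom(\Z/p^a\Z,\Z/p^b\Z)\cong \Z/p^{\min(a,b)}\Z$: such a homomorphism is determined by the image of a generator, which must be annihilated by $p^a$, and the $p^a$-torsion of $\Z/p^b\Z$ is the subgroup of order $p^{\min(a,b)}$. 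Hence the product above equals $p^{\sum_{i=1}^r \min(\lambda_i,\ell)}$.

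The second step is the purely combinatorial identity $\sum_{i=1}^r \min(\lambda_i,\ell) = \lambda_1' + \lambda_2' + \cdots + \lambda_{\ell}'$. This is most transparent by counting the boxes of the Young diagram of $\lambda$ lying in the first $\ell$ columns in two ways: summing over rows gives $\sum_i \min(\lambda_i,\ell)$, while summing over columns gives $\sum_{k=1}^{\ell}\lambda_k'$, since by definition $\lambda_k'$ is the number of parts of $\lambda$ of size at least $k$. Equivalently, one may swap the order of summation directly: $\sum_{k=1}^{\ell}\lambda_k' = \sum_{k=1}^{\ell} \#\{i : \lambda_i \ge k\} = \sum_i \#\{\, 1 \le k \le \ell : \lambda_i \ge k\,\} = \sum_i \min(\lambda_i,\ell)$. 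Combining the two steps yields $\#\Hom(H,\Z/p^{\ell}\Z) = p^{\lambda_1' + \cdots + \lambda_{\ell}'}$, and the final equality with $T_{\ell}(H)$ is exactly the displayed formula $T_{\ell}(H) = p^{\lambda_1'+\cdots+\lambda_{\ell}'}$ established immediately before the statement of the lemma, using $r_{p^k}(H) = \lambda_k'$.

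There is essentially no serious obstacle here; the content is entirely elementary. The only point requiring mild care is the edge behavior of $\min(\lambda_i,\ell)$ according to whether $\lambda_i \le \ell$ or $\lambda_i > \ell$, but both cases are subsumed uniformly by the $\min$, so no case analysis is actually needed. I would present the argument in the two short steps above rather than manipulating the $q$-analytic formulas, since the homomorphism-counting viewpoint is exactly what is wanted for its application in the proof of Theorem \ref{torspts}.
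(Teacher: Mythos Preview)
Your proof is correct and follows essentially the same approach as the paper: decompose $H$ into cyclic factors, count homomorphisms from each factor to $\Z/p^{\ell}\Z$ (the paper phrases this as counting images of generators, you phrase it as $\Hom(\Z/p^a\Z,\Z/p^b\Z)\cong\Z/p^{\min(a,b)}\Z$, which is the same thing), and then rewrite the resulting exponent $\sum_i\min(\lambda_i,\ell)$ as $\lambda_1'+\cdots+\lambda_\ell'$. The only cosmetic difference is that the paper groups the sum by the value of $\lambda_i$ and obtains $\ell\lambda_\ell'+(\ell-1)(\lambda_{\ell-1}'-\lambda_\ell')+\cdots+1\cdot(\lambda_1'-\lambda_2')$, which then telescopes, whereas your Young-diagram double-count reaches $\sum_{k\le\ell}\lambda_k'$ directly; your version is arguably cleaner.
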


\begin{proof}
Suppose
\[
H \cong \Z/p^{\lambda_1}\Z \times \cdots \times \Z/p^{\lambda_{r_p(H)}}\Z,
\]
and consider the particular generating set for $H$
\[
e_1 = (1,0,\ldots,0), e_2 = (0,1,0,\ldots,0),\ldots, e_{r_p(H)} = (0,\ldots, 0,1).
\]
Note that $e_i$ has order $p^{\lambda_i}$.

A homomorphism from $H$ to $\Z/p^\ell\Z$ is uniquely determined by the images of $e_1,\ldots, e_{r_p(H)}$.  When $\lambda_i \ge \ell$ there are $p^{\ell}$ choices for the image of $e_i$.  If $1 \le \lambda_i \le \ell$, there are $p^{\lambda_i}$ choices for the image of $e_i$.  Therefore, the total number of homomorphisms is
\[
p^{\ell  \lambda_\ell' + (\ell-1)  (\lambda_{\ell-1}' -\lambda_{\ell}') +  \cdots + 1 \cdot  (\lambda_1' - \lambda_2')}.
\]
\end{proof}

\begin{proof}[Proof of Theorem \ref{torspts}]
We  compute the expected value of \[ \#\Hom(H,\Z/p^{\ell}\Z) - \#\Hom(H,\Z/p^{\ell-1}\Z) \] and apply Lemma \ref{counthoms} to complete the proof.

Let $H$ be a finite abelian $p$-group drawn from $P_{d,u}$. Every element of $\Hom(H,\Z/p^{\ell}\Z)$ is either a surjection, or surjects onto a unique proper subgroup of $\Z/p^{\ell}\Z$.  Every proper subgroup of $\Z/p^{\ell}\Z$ is contained in the unique proper subgroup of $\Z/p^{\ell}\Z$ that is isomorphic to $\Z/p^{\ell-1}\Z$.  Therefore,
\[
\#\Surj(H,\Z/p^{\ell}\Z) = \#\Hom(H,\Z/p^{\ell}\Z) - \#\Hom(H,\Z/p^{\ell-1}\Z).
\]
Lemma \ref{counthoms} implies $T_{\ell}(H) - T_{\ell-1}(H) = \#\Surj(H,\Z/p^{\ell}\Z)$. Applying Theorem \ref{momgen}, noting that $T_{0}(H) = 1$ for any $H$, completes the proof.
\end{proof}

We close this section by proving a version of Lemma \ref{MomentsDetermine} for the distribution $P_{d,u}$.
The proof of Lemma 8.2 from \cite{EVW} carries over almost exactly to this more general setting.
\begin{theorem}\label{MomentsDetermine2}
Suppose that $p > 1$ and $0 < u < p$ are such that
\begin{equation}\label{Prod_pu}
\frac{1}{(u/p)_d} = \prod_{i=1}^{d} (1-u/p^i)^{-1} < 2.
\end{equation}
If $\nu$ is any probability measure on the set of partitions for which
\begin{equation}
\sum_{\lambda} \nu(\lambda)  |\Surj(\lambda,\mu)| =  \left\{ \begin{array}{ll}
\frac{u^{|\mu|} (1/p)_d}{(1/p)_{d-r(\mu)}} & \mbox{if $r(\mu) \leq d$} \\
0 & \mbox{otherwise},
\end{array} \right.
\end{equation}
then $\nu = P_{d,u}$.
\end{theorem}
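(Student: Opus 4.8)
The plan is to follow the proof of Lemma 8.2 of \cite{EVW}, modifying only the one quantitative estimate that the hypothesis \eqref{Prod_pu} is needed for. Write $M_H = \sum_{G} \nu(G)\,|\Surj(G,H)|$ for the $H$-moment of $\nu$; by hypothesis $M_H = u^{|\mu|}(1/p)_d/(1/p)_{d-r(\mu)}$ when $H$ has type $\mu$ with $r(\mu)\le d$, and $M_H = 0$ otherwise.

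I would first record a reduction. Taking $H$ with more than $d$ parts gives $M_H = 0$, whereas $|\Surj(G,H)| > 0$ whenever the type of $H$ is contained coordinatewise in the type of $G$; since $\nu(G)\ge 0$ this forces $\nu$ to be supported on partitions with at most $d$ parts, exactly as $P_{d,u}$ is. (This is in fact subsumed by the argument below, but it is convenient to isolate.)

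The heart of the matter is a M\"obius-type inversion. For a fixed finite abelian $p$-group $K$ of type $\kappa$, I would produce rational numbers $b_{K,H}$, indexed by finite abelian $p$-groups $H$, depending on $K$ and $p$ but not on $\nu$ and nonzero only when $\kappa$ is contained in the type of $H$, such that
\[
\sum_{H} b_{K,H}\,|\Surj(G,H)| = \begin{cases} 1 & \text{if } G\cong K,\\ 0 & \text{otherwise,}\end{cases}
\]
for every finite abelian $p$-group $G$. These coefficients are obtained by inverting the system $\big(|\Surj(G,H)|\big)_{G,H}$, which is triangular with respect to containment of types and has the nonzero numbers $|\Surj(H,H)| = |\Aut(H)|$ (Lemma \ref{nathan}) on its diagonal, so that the inversion is well defined level by level in $|H|$; since $n_\lambda(\mu)$ is a polynomial in $p$ (Chapter II of \cite{Mac}), the displayed identity is a polynomial identity in $p$ and holds for every real $p>1$, so primality of $p$ plays no role. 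Granting it, one sums against $\nu$ and interchanges the two summations:
\[
\nu(\{K\}) = \sum_{H} b_{K,H}\sum_{G} \nu(G)\,|\Surj(G,H)| = \sum_{H} b_{K,H}\,M_H .
\]
The right-hand side involves only the prescribed moments, so $\nu(\{K\})$ is determined for every $K$; since $P_{d,u}$ has these same moments by Theorem \ref{momgen}, we conclude $\nu = P_{d,u}$.

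The step that genuinely uses \eqref{Prod_pu} is justifying the interchange of summation, that is, the absolute convergence
\[
\sum_{H} |b_{K,H}|\,M_H \le \sum_{H} |b_{K,H}|\,u^{|\mu|} < \infty ,
\]
where $\mu$ is the type of $H$ and we used $M_H \le u^{|\mu|}$ (each factor $1-1/p^i$ lies in $(0,1)$). Estimating $|b_{K,H}|$ from the inversion and summing, one is led — exactly as in the corresponding step of \cite{EVW} — to a series that behaves like a Neumann series for an operator differing from an easily inverted one by a perturbation of size $1-(u/p)_d$, and \eqref{Prod_pu} is precisely the statement $1-(u/p)_d < 1/2$ that makes such a series converge absolutely. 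Carrying out this convergence estimate is the main obstacle; everything else is formal bookkeeping or a direct appeal to \cite{EVW}.
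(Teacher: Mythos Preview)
Your overall strategy is sound and is, at bottom, the same idea the paper uses, but you package it differently and leave the one nontrivial step unfinished. The paper does \emph{not} write down explicit M\"obius coefficients $b_{K,H}$ and then bound them; instead it runs the Neumann-series argument directly as an alternating sandwich. Concretely, from
\[
|\Aut(\mu)|\,\nu(\mu) \;=\; \frac{u^{|\mu|}(1/p)_d}{(1/p)_{d-r(\mu)}} \;-\; \sum_{\lambda\neq\mu}|\Surj(\lambda,\mu)|\,\nu(\lambda),
\]
it first gets the upper bound $|\Aut(\mu)|\nu(\mu)\le u^{|\mu|}(1/p)_d/(1/p)_{d-r(\mu)}$, feeds that back in to get a lower bound, and iterates. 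Using Theorem~\ref{momgen} for $P_{d,u}$ itself, the off-diagonal sum is identified as $\beta$ times the main term, where $\beta=\frac{1}{(u/p)_d}-1$, and one obtains
\[
\frac{u^{|\mu|}(1/p)_d}{(1/p)_{d-r(\mu)}}\sum_{k=0}^{2n+1}(-\beta)^k \;\le\; |\Aut(\mu)|\,\nu(\mu) \;\le\; \frac{u^{|\mu|}(1/p)_d}{(1/p)_{d-r(\mu)}}\sum_{k=0}^{2n}(-\beta)^k
\]
for every $n$. Hypothesis \eqref{Prod_pu} is exactly $\beta<1$, so the bounds squeeze to $\frac{u^{|\mu|}(1/p)_d}{(1/p)_{d-r(\mu)}}\cdot\frac{1}{1+\beta}=P_{d,u}(\mu)\,|\Aut(\mu)|$. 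No inverse matrix is ever named, and no convergence of $\sum_H |b_{K,H}|M_H$ has to be argued separately.

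Your proposal is the same Neumann series in disguise, but it defers precisely the step that carries all the content: you say the convergence estimate is ``the main obstacle'' and do not prove it. That is the gap. (One small point: proving $\sum_H|b_{K,H}|M_H<\infty$ is \emph{not} just a matter of bounding $M_H\le u^{|\mu|}$, since $u$ may exceed $1$; you really need the $|b_{K,H}|$ side, or equivalently the $\beta<1$ mechanism above.) If you want to repair your outline with minimal change, replace the abstract inversion by the paper's iterative bootstrap; it gives the same conclusion in a page and never requires you to touch $b_{K,H}$.
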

{\it Remarks:}
\begin{itemize}
\item When $p$ is prime this result has an interpretation in terms of probability measures on $\LL$.

\item The exact same argument proves the analogous result for the distribution $P_{\infty, u}$.

\item The expression on the left-hand side of \eqref{Prod_pu} is decreasing in $p$ and in $u$.  Setting $d= \infty,\ u =1$ and noting that this inequality holds for all $p \ge 3$ gives Lemma \ref{MomentsDetermine}.

\item Similarly, setting $d = \infty,\ u = 1/p^w$ (with $p$ prime and $w$ a positive integer) gives Proposition 2.3 of \cite{W3}.

\item Theorem \ref{MomentsDetermine2} only applies when $1/(u/p)_d < 2$.  Results of Wood imply that the moments determine the distribution in additional cases where $p$ is prime, for example when $p = 2,\ d=\infty$, and $u =1$.  See Theorem 3.1 in \cite{W2} and Theorem 8.3 in \cite{W}.
\end{itemize}

\begin{proof}
The assumption gives, for every $\mu$
\begin{equation}\label{StartingEq}
|\Aut(\mu)|  \nu(\mu) + \sum_{\lambda \neq \mu } |\Surj(\lambda,\mu)|  \nu(\lambda) =  \left\{ \begin{array}{ll}
\frac{u^{|\mu|} (1/p)_d}{(1/p)_{d-r(\mu)}} & \mbox{if $r(\mu) \leq d$} \\
0 & \mbox{otherwise}.
\end{array} \right.
\end{equation}
Since the second term in the left-hand side of \eqref{StartingEq} is non-negative, for $r(\mu) > d$ we have $|\Aut(\mu)| \nu(\mu) = 0$, so $\nu(\mu) = 0$.

Now suppose that $r(\mu) \le d$.  Our goal is to show that
\[
\nu(\mu) = \frac{u^{|\mu|}(u/p)_d}{|\Aut(\mu)|} \frac{(1/p)_d}{(1/p)_{d-r(\mu)}}.
\]
By Theorem \ref{momgen}, in the particular case $\nu = P_{d,u}$, \eqref{StartingEq} is equal to
\[
 \frac{u^{|\mu|} (u/p)_d (1/p)_d}{(1/p)_{d-r(\mu)}} + \sum_{\substack{\lambda \neq \mu \\ r(\lambda) \le d}} u^{|\lambda|} (u/p)_d \frac{   |\Surj(\lambda,\mu)|}{|\Aut(\lambda)|}  \frac{(1/p)_d}{(1/p)_{d-r(\lambda)}} = \frac{u^{|\mu|} (1/p)_d}{(1/p)_{d-r(\mu)}} .
\]
This gives
\[
\sum_{\substack{\lambda \neq \mu \\ r(\lambda) \le d}} u^{|\lambda|} \frac{   |\Surj(\lambda,\mu)|}{|\Aut(\lambda)| (1/p)_{d-r(\lambda)}}
=
\frac{u^{|\mu |}}{(1/p)_{d-r(\mu)}}
\left(\frac{1}{(u/p)_d }  - 1\right).
\]

Let
\[
\beta = \frac{(1/p)_{d-r(\mu)}}{u^{|\mu|}} \sum_{\substack{\lambda \neq \mu \\ r(\lambda) \le d}} u^{|\lambda|} \frac{   |\Surj(\lambda,\mu)|}{|\Aut(\lambda)| (1/p)_{d-r(\lambda)}}
=
\frac{1}{(u/p)_d }  - 1.
\]
It is enough to show that
\begin{equation}\label{BetaEq}
|\Aut(\mu)| \nu(\mu) = u^{|\mu|}\frac{(1/p)_d}{(1/p)_{d-r(\mu)}}  \frac{1}{\beta+1}.
\end{equation}
By assumption, $|\beta| < 1$, so we verify \eqref{BetaEq} by showing that $|\Aut(\mu)| \nu(\mu)$ is bounded by the alternating partial sums of the series
\[
u^{|\mu|}\frac{(1/p)_d}{(1/p)_{d-r(\mu)}} \frac{1}{\beta+1} = u^{|\mu|}\frac{(1/p)_d}{(1/p)_{d-r(\mu)}} (1-\beta+\beta^2 - \cdots).
\]

Equation \eqref{StartingEq} implies that
\[
|\Aut(\mu)| \nu(\mu) \le \frac{u^{|\mu|} (1/p)_d}{(1/p)_{d-r(\mu)}}.
\]
For any $\lambda$ with $r(\lambda) \le d$ this gives
\[
\nu(\lambda) \le \frac{u^{|\lambda|} (1/p)_d}{|\Aut(\lambda)|(1/p)_{d-r(\lambda)}}.
\]
Using this bound in \eqref{StartingEq} gives
\begin{eqnarray*}
|\Aut(\mu)|  \nu(\mu) & = &  u^{|\mu|} \frac{(1/p)_d}{(1/p)_{d-r(\mu)}} -  \sum_{\substack{\lambda \neq \mu \\ r(\lambda) \le d}} |\Surj(\lambda,\mu)|  \nu(\lambda) \\
& \ge &   u^{|\mu|} \frac{(1/p)_d}{(1/p)_{d-r(\mu)}}- \sum_{\substack{\lambda \neq \mu \\ r(\lambda) \le d}} u^{|\lambda|} \frac{   |\Surj(\lambda,\mu)|}{|\Aut(\lambda)|} \frac{(1/p)_d}{(1/p)_{d-r(\lambda)}}\\
& & = \frac{u^{|\mu|} (1/p)_d}{(1/p)_{d-r(\mu)}}-  \frac{u^{|\mu|}(1/p)_d}{(1/p)_{d-r(\mu)}} \beta = \frac{u^{|\mu|} (1/p)_d}{(1/p)_{d-r(\mu)}}  (1-\beta).
\end{eqnarray*}

Similarly, for any $\lambda$ with $r(\lambda) \le d$, this gives
\[
\nu(\lambda) \ge \frac{u^{|\lambda|}}{|\Aut(\lambda)|} \frac{(1/p)_d}{(1/p)_{d-r(\lambda)}}  (1-\beta).
\]
Using this bound in \eqref{StartingEq} gives
\begin{eqnarray*}
|\Aut(\mu)|  \nu(\mu) & = &  u^{|\mu|} \frac{(1/p)_d}{(1/p)_{d-r(\mu)}} -  \sum_{\substack{\lambda \neq \mu \\ r(\lambda) \le d}} |\Surj(\lambda,\mu)|  \nu(\lambda) \\ \\
& \le &   u^{|\mu|} \frac{(1/p)_d}{(1/p)_{d-r(\mu)}} - \sum_{\substack{\lambda \neq \mu \\ r(\lambda) \le d}} u^{|\lambda|} \frac{   |\Surj(\lambda,\mu)|}{|\Aut(\lambda)|} \frac{(1/p)_d}{(1/p)_{d-r(\lambda)}}(1-\beta)
\end{eqnarray*}
which implies
\begin{eqnarray*}
|\Aut(\mu)|  \nu(\mu) &  \le &  u^{|\mu|} \frac{(1/p)_d}{(1/p)_{d-r(\mu)}}- u^{|\mu|} \frac{(1/p)_d}{(1/p)_{d-r(\mu)}}\beta (1-\beta) \\
& =  & u^{|\mu|} \frac{(1/p)_d}{(1/p)_{d-r(\mu)}} (1-\beta+\beta^2).
\end{eqnarray*}

Continuing in this way completes the proof.
\end{proof}


\begin{thebibliography}{A}

\bibitem{BKLPR} Bhargava, M., Kane, D., Lenstra, H., Poonen, B., and Rains, E., Modeling
the distribution of ranks, Selmer groups, and Shafarevich-Tate groups of elliptic
curves, {\it Camb. J. Math.} {\bf 3} (2015), 275-321.

\bibitem{BW} Boston, N. and Wood, M. M., Non-abelian Cohen-Lenstra heuristics over function fields. {\it Compos. Math.} {\bf 153} (2017), 1372-1390.

\bibitem{CKK} Chinta, G., Kaplan, N., and Koplewitz, S., {The cotype zeta function of $\mathbb{Z}^d$}. Preprint (2017). \url{https://arxiv.org/abs/1708.08547v1}.

\bibitem{CKLPW} Clancy, J., Kaplan, N., Leake, T., Payne, S., and Wood, M., On a Cohen-Lenstra heuristic
for Jacobians of random graphs, {\it J. Algebraic Combin.} {\bf 42} (2015), 701-723.

\bibitem{CL} Cohen, H., and Lenstra, H. W., Heuristics on class groups of number fields, In {\it Number Theory {N}oordwijkerhout 1983 ({N}oordwijkerhout,1983)}, pages 33--62. Springer, Berlin, 1984.

\bibitem{DelJou} Delaunay, C., and Jouhet, F., $p^\ell$-torsion points in finite abelian groups and combinatorial identities. {\it Adv. Math.} {\bf 258} (2014), 13--45.

\bibitem{DelJou2} Delaunay, C., and Jouhet, F., The Cohen-Lenstra heuristics, moments and $p^j$-ranks of some groups. {\it Acta Arith.} {\bf 164} (2014), no 3, 245--263.

\bibitem{Del2} Delaunay, C., Heuristics of Tate-Shafarevitch groups of elliptic curves defined over $\Q$,
{\it Experiment. Math.} {\bf 10} (2001), 191-196.

\bibitem{Del} Delaunay, C., Averages of groups involving $p^l$-rank and combinatorial identities,
{\it J. Number Theory} {\bf 131} (2011), 536-551.

\bibitem{EVW} Ellenberg, J., Venkatesh, A., and Westerland, C., Homological stability for Hurwitz
spaces and the Cohen-Lenstra conjecture over function fields, {\it Ann. of Math. (2)} {\bf 183}
(2016), 729-786.

\bibitem{FoKlu} Fouvry, E., and Kl\"uners, J., Cohen-Lenstra heuristics of quadratic number fields. {\it Algorithmic number theory}, 40--55, Lecture Notes in Comput. Sci., 4076, {\it Springer, Berlin,} 2006.

\bibitem{FW} Friedman, E., and Washington, L., On the distribution of divisor class groups of curves over
a finite field. In {\it Theorie des nombres (Quebec, PQ, 1987)}, pages 227-239. de Gruyer, Berlin,
1989.

\bibitem{Fu} Fulman, J., A probabilistic approach to conjugacy classes in the finite general and
unitary groups, {\it J. Algebra} {\bf 212} (1999), 557-590.


\bibitem{Fu3} Fulman, J., Hall-Littlewood polynomials and Cohen-Lenstra heuristics for Jacobians
of random graphs, {\it Annals Combin.} {\bf 20} (2016), 115-124.

\bibitem{Ger} Gerth III, F., The $4$-class ranks of quadratic fields. {\it Invent. Math} {\bf 77} (1984), no. 3, 489--515.

\bibitem{Ger2} Gerth III, F., Extension of conjecture of Cohen and Lenstra. {\it Exposition. Math.} {\bf 5} (1987), no. 2, 181--184.

\bibitem{HW} Hardy, G. and Wright, E., An introduction to the theory of numbers, Fifth edition,
Oxford Science Publications, 1979.

\bibitem{lubotzky_segal} Lubotzky, A. and Segal, D., Subgroup Growth.
Progress in Mathematics, 212. Birkh\"auser Verlag, Basel, 2003. xxii+453 pp.

\bibitem{Mac} Macdonald, I., Symmetric functions and Hall polynomials, Second edition, Oxford
University Press, 1995.

\bibitem{petrogradsky} Petrogradsky, V., Multiple zeta functions and asymptotic structure of free abelian groups of finite rank. {\it J. Pure Appl. Algebra} {\bf 208} (2007), 1137-1158.

\bibitem{Speyer} Speyer, D., An expectation of Cohen-Lenstra measure (answer). MathOverflow. \url{https://mathoverflow.net/questions/9950} (visited on 01/19/2019).

\bibitem{SW} Stanley, R. and Wang, Y., The Smith normal form distribution of a random integer matrix, {\it SIAM J. Discrete Math.} {\bf 31} (2017), 2247--2268.

\bibitem{Tse} Tse, L-S., Distribution of cokernels of $(n+u) \times n$ matrices over $\Z_p$,
arXiv:1608.01714, 2016.

\bibitem{W} Wood, M. M., The distribution of sandpile groups of random graphs, {\it J. Amer. Math. Soc.}
{\bf 30} (2017), 915-958.

\bibitem{W2} Wood, M. M., Random integral matrices and the Cohen-Lenstra heuristics, Preprint (2018).  To appear in {\it Amer. J. Math.}  \url{https://preprint.press.jhu.edu/ajm/sites/ajm/files/AJM-wood-FINAL.pdf}.

\bibitem{W3} Wood, M. M., Cohen-Lenstra heuristics and local conditions, {\it Res. Number Theory} {\bf 4} (2018), no 4, Art. 41, 22 pp.

\bibitem{Wright} Wright, D. J., Distribution of discriminants of abelian extensions, {\it Proc. London Math. Soc.} {\bf 58} (1989), 17--50.

\end{thebibliography}
\end{document}